\def\PREP{PREP}
\def\ISSAC{ISSAC}
\def\form{PREP}
\definecolor{darkblue}{RGB}{0,0,160}
\tikzstyle box style=[thick, color=black]
\newcommand{\excise}[1]{}
\newtheorem{thm}{Theorem}[section]
\newtheorem{lemma}[thm]{Lemma}
\newtheorem{cor}[thm]{Corollary}
\newtheorem{prop}[thm]{Proposition}
\theoremstyle{definition}
\newtheorem{example}[thm]{Example}
\newtheorem{remark}[thm]{Remark}
\newtheorem{defn}[thm]{Definition}
\numberwithin{equation}{section}
\newcommand{\quest}[1]{{\bf(#1)}}
\newcommand{\ring}[1]{\ensuremath{\mathbb{#1}}}
\renewcommand\>{\rangle}
\newcommand\<{\langle}
\newcommand\NN{\ring{N}}
\newcommand\PP{{\ring{P}}}
\newcommand\ZZ{\ring{Z}}
\newcommand\into{\hookrightarrow}
\DeclareMathOperator\width{width}
\DeclareMathOperator\im{im}
\DeclareMathOperator{\Quad}{Quad}
\newcommand{\fourtitwo}{\textsc{4ti2}\ }
\newcommand{\ideal}[1]{\langle #1 \rangle}
\newcommand{\Sym}{S_\infty}
\newcommand{\ZZN}{\bigoplus_\NN\ZZ} 
\newcommand{\ZZNN}{\bigoplus_{\NN\times\NN}\ZZ}
\newcommand{\ZZNk}{\bigoplus_{\NN^k}\ZZ}
\newcommand{\ZZkN}{\bigoplus_{[k-1]\times\NN}\ZZ}
\DeclareMathOperator{\Inc}{Inc}
\begin{document}
\ifx\form\ISSAC
  \conferenceinfo{ISSAC'14}{July 23--25, 2014, Kobe, Japan.}
  \CopyrightYear{2014}
  \crdata{978-1-4503-2501-1/14/07}
\fi

\title{Equivariant lattice generators and Markov bases}

\ifx\form\ISSAC
\numberofauthors{3}
\author{
\alignauthor Thomas Kahle\\
       \affaddr{Otto-von-Guericke Universität, Magdeburg, Germany} \\
       \email{thomas.kahle@ovgu.de}
\alignauthor Robert Krone\\
       \affaddr{Georgia Institute of Technology, Atlanta, USA.}
       \email{krone@math.gatech.edu}
\alignauthor Anton Leykin\\
       \affaddr{Georgia Institute of Technology, Atlanta, USA.}
       \email{leykin@math.gatech.edu}
}
\fi

\ifx\form\PREP
\author{Thomas Kahle}
\address{Otto-von-Guericke Universität\\ Magdeburg, Germany} 
\email{\url{http://www.thomas-kahle.de}}

\author{Robert Krone}
\address{Georgia Institute of Technology\\ Atlanta, USA} 
\email{krone@math.gatech.edu}

\author{Anton Leykin}
\address{Georgia Institute of Technology\\ Atlanta, USA} 
\email{leykin@math.gatech.edu}

\thanks{The second and third author are supported in part by the NSF under grant DMS~1151297.}


\makeatletter
  \@namedef{subjclassname@2010}{\textup{2010} Mathematics Subject Classification}
\makeatother

\subjclass[2010]{Primary: 13E15; Secondary: 13P10, 14M25, 05E18, 14L30, 20B30}


\fi

\ifx\form\ISSAC
\maketitle
\fi

\begin{abstract}
It has been shown recently that monomial maps in a large class
respecting the action of the infinite symmetric group have, up to
symmetry, finitely generated kernels. We study the simplest nontrivial
family in this class: the maps given by a single monomial.
Considering the corresponding lattice map, we explicitly construct an
equivariant lattice generating set, whose width (the number of
variables necessary to write it down) depends linearly on the width of
the map. This result is sharp and improves dramatically the previously
known upper bound
as it does not depend on the degree of the image monomial.
%
In the case of of width two, we
construct an explicit finite set of binomials generating the toric
ideal up to symmetry. Both width and degree of this generating set are
sharply bounded by linear functions in the exponents of the monomial.
\end{abstract}

\ifx\form\PREP
\maketitle
\fi

\section{Introduction}
\label{sec:introduction}
In algebraic statistics one frequently encounters the question of
stabilization of chains of ideals up to symmetry.  These ideals are
usually parametrized by combinatorial objects (like graphs), and
additionally some numerical parameters.  If the parameters grow, then
the natural symmetry groups acting on the ideals also grow and one
asks if there is stabilization up to symmetry.  On the negative side,
there is the no hope theorem of De Loera and Onn giving large families
with no hope for
stabilization~\cite{loera06:_markov_bases_of_three_way}.  On the
positive side, the independent set theorem of Hillar and Sullivant
~\cite{hillar11:_finit_gr} and a more general recent result on equivariant toric maps~\cite{DEKL13:inf-toric} give large classes with stabilization.
The present paper aims at \emph{constructive} progress towards
understanding the subtle boundary between stabilization and
divergence.

Let $K[Y] := K[y_{ij}\mid i,j\in\NN,\,i\neq j]$ and $K[X] :=
K[x_i\mid i\in \NN]$ be polynomial rings with countably many
indeterminates over a ring~$K$.  Consider the the monomial map
\begin{equation}\label{eq:map-width-2}
\pi:K[Y] \to K[X], \qquad y_{ij} \mapsto x_i^ax_j^b.
\end{equation}
The variables $y_{ii}$ are excluded since they would only contributed
trivial linear relations $y_{ii} = y_{jj}$.  The infinite symmetric
group $\Sym$ of all bijections $\sigma: \NN\to \NN$, acts on~$X$
and~$Y$:
\[
\sigma(x_i) = x_{\sigma(i)} \text{ and } \sigma(y_{ij}) = y_{\sigma(i)\sigma(j)}.
\]
The ring $K[X]$ is {\em equivariantly Noetherian}: every ideal that is
closed under the action of $\Sym$ is finitely generated \emph{up to
symmetry}~\cite{aschenbrenner07:_finit,aschenbrenner09:_erratum}.
Although $K[Y]$ is not $\Sym$-Noetherian, for a large family of
monomial maps $\pi$ (including those considered here) the ideal
$\ker(\pi) \subset K[Y]$---an \emph{infinite-dimensional toric
ideal}---is finitely generated up to symmetry~\cite{DEKL13:inf-toric}.

Any monomial map~$\pi$---finite or infinite-dimensional---is closely
related to its \emph{linearization}~$A_\pi$: the $\ZZ$-linear map on
exponents
\[
A_\pi: \ZZNN \to\ZZN, \qquad A_\pi(e_{ij}) = a e_i + b e_j
\]
where $e_{ij}$ and $e_i$ are the standard basis vectors of $\ZZNN$ and
$\ZZN$, respectively, and $\bigoplus$ denotes the direct sum of
modules.  Each $v\in \ZZNN$ translates to a binomial $y^{v_+} -
y^{v_-} \in K[Y]$ where $(v_\pm)_i = \max\{\pm v_i, 0\}$ are the
positive and negative parts of~$v$, respectively.  The binomials of
this form are exactly those whose terms have greatest common divisor
equal to one.  All minimal generators of toric ideals are of this
form.

The kernel $L = \ker(A_\pi)$ is an infinite-dimensional \emph{(integer)
  lattice}. We call $V\subset \ZZNN$ an \emph{equivariant
  lattice generating set} (or \emph{equivariant lattice generators})
if the $\Sym$-orbits of its elements generate~$L$.  This happens if
and only if the $\Sym$-orbits of $\{y^{v_+} - y^{v_-} \mid v\in V\}$
generate the extension of $\ker(\pi)$ in the ring of {\em Laurent
  polynomials} $K[Y^\pm]$.  An~$\Sym$-generating set of $\ker(\pi)$ is
an {\em equivariant Markov basis} and also spans~$\ker(\pi)K[Y^\pm]$.

\begin{remark}\label{rem:minimal}
Ideally, one would like to define an \emph{equivariant lattice basis},
a generating set whose orbits freely generate the lattice. However,
already in the finite-dimensional case this seems hard: Consider the
sublattice of $\ZZ^3$ generated by $S_3$ acting on $(1,-1,0)$.  One
would like to call $\{(1,-1,0)\}$ an equivariant lattice basis, but
there is a nontrivial linear relation among the elements of the orbit:
\[
(1,0,-1) + (-1,1,0) + (0,-1,1).
\]
Even if there are no relations among elements of the orbit, one has
the problem that lattice bases can not be defined by inclusion
minimality (the integers $2$ and $3$ span $\ZZ$, but no subset does).
One remedy (in the finite-dimensional setting) are matroids over rings
as defined by Fink and Moci~\cite{fink2012matroids}.  There each
subset of the base set is assigned a module, instead of just its rank.
\end{remark}
Any infinite-dimensional object in our setting has {\em truncations}
which are its images in finite-dimensional subspaces.  
\begin{defn}\label{d:truncation}
The {\em $n$-th truncation} of an object indexed by $\NN$ is the
sub-object indexed by $[n] \subset \NN$.  The {\em width} of an
equivariant object is the minimal $n\in\NN$ such that the $n$-th
truncation determines it up to action of~$\Sym$.  The width of an
object which can not be described finitely up to symmetry is infinity.
\end{defn}
The ``objects'' in Definition~\ref{d:truncation} are usually sets of
indeterminates or lattice vectors.
\begin{example}
The $n$-th truncations of the sets of indeterminates $X$ and $Y$ are
$X_n=\{x_i\in X\mid i\leq n\}$ and $Y_n=\{y_{ij}\in Y\mid i,j\leq
n\}$, respectively.  Their widths are $\width(X)=1$,
$\width(Y)=\width(\pi)=\width(\im(\pi))=\width(A_\pi)=2$.
\end{example}
\begin{remark}
An $n$-th truncation of an $\Sym$-invariant object is $S_n$-invariant.
\end{remark}

We study the following problems:
\begin{itemize} \itemsep0em
\item[\quest{L}] Find an {\em equivariant generating set} of~$\ker(A_\pi)$.
\item[\quest{BL}] Bound the {\em width} of an equivariant generating set of smallest width.
\item[\quest{M}] Find an {\em equivariant Markov basis} of~$\ker(\pi)$.
\item[\quest{BM}] Bound the {\em width} of an equivariant Markov basis of smallest width.
\end{itemize}
We are also interested in questions \quest{L}, \quest{BL}, \quest{M},
and \quest{BM} in more general settings than that
in~\eqref{eq:map-width-2}:
\begin{enumerate}
\item[\quest{1}] For $k\in\NN$ let $Y = \{y_\alpha\mid
\alpha\in\NN^k,\, \alpha_j \text{ distinct}\}$ be a set of
indeterminates.  For some fixed values $a_1,\dots, a_k \in\NN$ one can
consider the width~$k$ monomial map
\[
\pi: K[Y] \to K[X], \qquad y_\alpha\mapsto x_{\alpha_1}^{a_1}\cdots x_{\alpha_k}^{a_k}.
\]
\item[\quest{2}] For $k,m,N\in\NN$, let
\begin{align*}
Y &= \{y_{i\alpha} \mid i\in[N],\,\alpha\in\NN^k,\, \alpha_j \text{ distinct}\}\\ 
X &= \left\{x_{lj} \mid l\in [m],\ j\in\NN \right\}
\end{align*}
be sets of indeterminates on which $\Sym$ acts on the second index.
It is known that $K[X]$ is equivariantly Noetherian.  One can now
consider a monomial map $\pi:K[Y]\to K[X]$ of width up to~$k$, that
is, each $y_{i\alpha}$ maps to some monomial in the $x_{lj}$ with
$j\in [k]$.  This is the most general setting for which $\ker(\pi)$ is
known to be finitely generated up to symmetry~\cite{DEKL13:inf-toric}.
\end{enumerate}

A bound in \cite{Hillar13} answers the question \quest{BL1}.  If $\pi$
is defined by a single monomial $x_{\alpha_1}^{a_1}\cdots
x_{\alpha_k}^{a_k}$, there is an equivariant lattice generating set of
$\ker(A_\pi)$ of width~$2d-1$, where $d = a_1+\cdots+a_k$ is the degree
of the image monomial.  In Section~\ref{sec:lattice} we improve this
bound by an explicit construction of equivariant lattice generators
(Theorem~\ref{thm:LatticeBasis}), thus answering
question~\quest{L1}. The width of our basis is two more than the
width~$k$ of the map and thus independent of degree~$d$
(Corollary~\ref{cor:latticeBasisWidth}).

One of our tools is an idea from~\cite{DEKL13:inf-toric}: The map
$\pi$ factors as
\begin{equation}\label{eq:factorize}
\pi: K[Y] \xrightarrow{\phi} K[Z] \xrightarrow{\psi} K[X],
\end{equation}
into two maps that are easier to analyze.  For instance, in the
setting of question~\quest{L1}, we introduce indeterminates $Z_k =
\{z_{ij}, i=1,\dots,k, j \in \NN\}$ and define $\phi,\psi$ by linear
extension of
\[
\phi: y_{\alpha} \mapsto \prod_{i=1}^k z_{i \alpha_i} \qquad \text
{and} \qquad \psi: z_{ij} \mapsto x_j^{a_j}.
\]
The union of a Markov basis for $\ker(\phi)$ and the pullback of a
Markov basis of $\im(\phi) \cap \ker(\psi)$ forms a Markov basis for
$\ker(\pi)$ and similarly for $\ker(\pi)K[Y^\pm]$.

One could hope to compute an equivariant Markov basis of $\ker(\pi)$
by computing a (usual) Markov basis $M_n$ for some $n$-th truncation
$\ker(\pi)_n = \ker(\pi) \cap K[Y_n]$ and check if it
$S_{n+l}$-generates $\ker(\pi)_{n+l}$, for sufficiently many~$l$.
Unfortunately it is unknown how large $l$ needs to be to guarantee
stabilization.

The paper is structured as follows.  In Section~\ref{sec:lattice} we
give an explicit construction of equivariant lattice generators of
$\ker(A_\phi)$ for arbitrary width of the image monomial (in the
setting of \quest{L1}).  Section~\ref{sec:computations} briefly
outlines our computational experiments with truncated Markov bases.
Section~\ref{sec:markov-2} proceeds with an explicit combinatorial
construction of a Markov basis in the width two case.  We conclude in
Section~\ref{sec:questions-problems} with some discussion and open
problems.

\section{\ifx\form\ISSAC\hspace{-.93ex}\fi Equivariant lattice generators}\label{sec:lattice}
We first consider question \quest{L1} in width two ($k=2$).
That is, consider the map $\pi : K[Y] \to K[X]$, defined by $y_{ij}
\mapsto x_i^a x_j^b$.  We look for a generating set of the
extension~$\ker(\pi)K[Y^\pm]$.  To this end we split $\pi$ as
in~\eqref{eq:factorize}, and think of exponents of monomials in
$K[z_{1i},z_{2i}]$ as $2 \times \NN$ matrices.

\begin{prop}\label{prop:2indexLattice}
For $\pi$ with width two, $\ker(\pi)K[Y^\pm]$ is
generated up to symmetry by two binomials:
\[y_{12}y_{34} - y_{14}y_{32}\quad \text{ and }\quad
y_{21}^by_{31}^{a-b} - y_{12}^by^{a-b}_{32}.\]
\end{prop}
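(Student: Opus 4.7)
The plan is to exploit the factorization $\pi=\psi\circ\phi$ from~\eqref{eq:factorize}, where $\phi(y_{ij})=z_{1i}z_{2j}$ and $\psi(z_{1i})=x_i^a$, $\psi(z_{2i})=x_i^b$. Passing to linearizations, any $v\in\ker(A_\pi)$ satisfies $A_\phi(v)\in\ker(A_\psi)\cap\im(A_\phi)$, and conversely any $A_\phi$-preimage of an element of this intersection differs from an element of $\ker(A_\phi)$ by a vector in $\ker(A_\pi)$. Hence $\ker(A_\pi)$ is generated as a lattice by $\ker(A_\phi)$ together with $A_\phi$-preimages of generators of $\ker(A_\psi)\cap\im(A_\phi)$, and the analogous statement holds $\Sym$-equivariantly. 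Since $\ker(\pi)K[Y^\pm]$ is generated as an ideal by $\{y^{v_+}-y^{v_-}:v\in V\}$ for any lattice generating set $V$ of $\ker(A_\pi)$, it suffices to produce two equivariant lattice generators.

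For $\ker(A_\phi)$, I identify $v\in\ZZNN$ with an integer matrix $(v_{ij})$ supported off the diagonal; the condition $A_\phi(v)=0$ is precisely that every row and every column sum vanishes. I would argue that the $\Sym$-orbit of the square move $e_{12}+e_{34}-e_{14}-e_{32}$ generates this lattice by a support-reducing induction on $\sum_{i,j}|v_{ij}|$: given $v\neq 0$, pick a nonzero entry $v_{ij}$, use the vanishing row sum at $i$ to locate an opposite-sign entry $v_{ij'}$, then the vanishing column sum at $j'$ to locate an opposite-sign entry $v_{i'j'}$, and subtract an appropriately $\Sym$-translated square move to cancel one of the chosen positions. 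The distinctness constraints forced by the exclusion of the diagonal variables $y_{ii}$ can always be met by choosing a fresh index outside $\supp(v)$ when the greedy choice collides with the diagonal, which is possible because infinitely many variables are available.

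For $\ker(A_\psi)\cap\im(A_\phi)$, the kernel $\ker(A_\psi)$ is generated columnwise by vectors proportional to $b\,e_{1,i}-a\,e_{2,i}$, while $\im(A_\phi)$ is precisely the set of matrices in $\ZZ^{\{1,2\}\times\NN}$ whose top and bottom row sums agree, since each generator $e_{1,i}+e_{2,j}$ contributes one to each row. Intersecting, a general element has the form $\sum_i t_i(b\,e_{1,i}-a\,e_{2,i})$ with $\sum_i t_i=0$, so the intersection is generated by differences across pairs of columns, and up to $\Sym$ a single representative $b(e_{1,1}-e_{1,2})+a(e_{2,2}-e_{2,1})$ suffices. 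Solving $A_\phi(v)=b(e_{1,1}-e_{1,2})+a(e_{2,2}-e_{2,1})$ on the three-index block $\{1,2,3\}$ produces a one-parameter family of lifts; choosing the parameter that supports $v$ on exactly four off-diagonal positions yields the lattice vector of $y_{21}^b y_{31}^{a-b}-y_{12}^b y_{32}^{a-b}$, the second binomial in the proposition. The main obstacle will be the reduction argument for $\ker(A_\phi)$: carefully handling the diagonal exclusion while ensuring each elementary step strictly decreases the chosen complexity measure. The lift is elementary linear algebra on three indices, and the final assembly of the two generators is formal from the exact sequence $\ker(A_\phi)\hookrightarrow\ker(A_\pi)\to\ker(A_\psi)\cap\im(A_\phi)$.
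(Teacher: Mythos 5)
Your overall strategy is exactly the paper's: factor $\pi=\psi\circ\phi$, observe the exact sequence $\ker(A_\phi)\hookrightarrow\ker(A_\pi)\twoheadrightarrow\ker(A_\psi)\cap\im(A_\phi)$, determine the intersection by the equal-row-sums constraint, and exhibit a lift for its single generator. The second half of your argument is correct and essentially identical to the paper's, which computes $\phi(y_{21}^b y_{31}^{a-b}-y_{12}^b y_{32}^{a-b})$ directly rather than solving for the lift on a three-index block, but this is a cosmetic difference.

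The gap is in your treatment of $\ker(A_\phi)$. Your proposed support-reducing induction does not go through, and the 3-cycle vector $c=e_{12}-e_{13}-e_{21}+e_{23}+e_{31}-e_{32}$ shows why. Starting your greedy rule at any nonzero entry of $c$, the fourth corner $(i',j)$ lands on the diagonal: from $v_{12}=1$ you find $v_{13}=-1$ in row~$1$, then $v_{23}=1$ in column~$3$, forcing $(i',j)=(2,2)$, and all five other starting choices collide in the same way. Your fallback of replacing the offending index by a fresh $m\notin\supp(v)$ makes the square legal, but since $v_{mj}=v_{mj'}=0$ beforehand and become $\pm 1$ afterward, the two decrements at $(i,j)$ and $(i,j')$ are exactly cancelled by two new units of mass at $(m,j)$ and $(m,j')$, so $\sum_{i,j}\lvert v_{ij}\rvert$ does not decrease and the induction stalls. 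Indeed the paper's remark following the proposition exhibits $c$ as a sum of three square moves through a fourth index, and no single summand of that decomposition lowers $\sum\lvert v_{ij}\rvert$; so no argument based on strict monotone decrease of that measure can succeed. The paper avoids this by citing the classical presentation of the Segre variety/independence model by $2\times 2$ minors, noting that the missing diagonal is harmless in the Laurent ring, and it deals carefully with the diagonal combinatorics only in Proposition~\ref{prop:phiGens}, where the 3-cycle cubic is genuinely needed in $K[Y]$. To make your sketch rigorous you would need either a different potential function or a preliminary step transporting all of $\supp(v)$ into an off-diagonal block, for example rows $[n]$ against columns $\{n+1,\dots,2n\}$, before invoking the unrestricted transportation-array argument.
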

\begin{proof}
The \emph{basic quadric} $y_{12}y_{34} - y_{14}y_{32}$ suffices to
generate $\ker(\phi)K[Y^\pm]$ up to symmetry.  This is a classic
result in commutative algebra~(see \cite{room1938geometry} for the
early history) and used often in algebraic statistics: $(2\times
2)$-minors are a Markov basis for the independence model (see
Remark~\ref{rem:indepModel} and
\cite[\S~1.1]{drton09:_lectur_algeb_statis} ).  The non-existing
diagonal variables pose no problem for us since we only need the
Laurent case.

On the exponents of monomials in $K[z_{1i},z_{2i}]$, the linearization
$A_\psi$ of $\psi$ acts by left multiplication with the matrix
$\begin{bmatrix} a & b \end{bmatrix}$.  We can assume that $a$ and $b$ are
relatively prime without loss of generality.  Then $\ker(\psi)$ consists of all matrices of the form
\[ 
\begin{bmatrix} -b \\ a \end{bmatrix}
\begin{bmatrix} n_0 & n_1 &\cdots \end{bmatrix} 
\] 
with $n_i \in \ZZ$.  
Every element in $\im(\phi)\cap \ker(\psi)$ must satisfy $\sum_i n_i =
0$ since $\im(A_\phi)$ is generated by matrices whose two non-zero
entries have the same sign.  The permutations of
\[ 
w := \begin{bmatrix}-b & b & 0 & \cdots \\
a &-a & 0 & \cdots \end{bmatrix} 
\]
form an equivariant lattice generating set for such elements.
Consequently $\im(\phi) \cap \ker(\psi)$ is contained in the lattice
generated up to symmetry by~$w$.  Now
\[ 
\phi(y_{21}^by_{31}^{a-b} - y_{12}^by^{a-b}_{32}) =
z_{13}^{a-b}(z_{12}^b z_{21}^a - z_{11}^b z_{22}^a) 
\] 
which has lattice element $w$, so $y_{21}^by_{31}^{a-b} -
y_{12}^by^{a-b}_{32}$ is the pullback of the generator of $\im(\phi)
\cap \ker(\psi)$.
\end{proof}
 
\begin{remark} 
A generating set in $K[Y]$ for the kernel of a map of width two
requires the 3-cycle cubic $y_{12}y_{23}y_{31} - y_{21}y_{32}y_{13}$
(see Proposition~\ref{prop:phiGens}).  However in the Laurent
ring~$K[Y^\pm]$, this binomial is redundant modulo the basic quadric
$y_{12}y_{34}-y_{14}y_{32}$.  The 3-cycle cubic's exponent is
 \[ 
c = \begin{bmatrix} 0 & 1 &-1 & \\
                       -1 & 0 & 1 & \cdots \\
                        1 &-1 & 0 & \\
                          & \vdots & & \ddots \end{bmatrix}, 
\]
which can be expressed in terms of basic quadrics as
 \[ c = \begin{bmatrix} 0 & 0 & 0 & 0 & \\
                       -1 & 0 & 0 & 1 & \cdots \\
                        1 & 0 & 0 &-1 & \\
                          & \vdots & & & \end{bmatrix}
      + \begin{bmatrix} 0 & 1 & 0 &-1 & \\
                        0 & 0 & 0 & 0 & \cdots \\
                        0 &-1 & 0 & 1 & \\
                          & \vdots & & & \end{bmatrix}
   \ifx\form\ISSAC \]\[ \fi
      + \begin{bmatrix} 0 & 0 &-1 & 1 & \\
                        0 & 0 & 1 &-1 & \cdots \\
                        0 & 0 & 0 & 0 & \\
                          & \vdots & & & \end{bmatrix}. \]
\end{remark}

We now generalize Proposition~\ref{prop:2indexLattice} to
arbitrary~$k$.  When necessary, we write~$\phi^{(k)}$ instead
of~$\phi$ to emphasize the width of the image monomial but usually the
level of generality is clear from the context and we avoid overloading
the notation too much.  Elements of $\ZZNk$ should be thought of
as $k$-dimensional tables of infinite size with integer entries.  Our
setup additionally requires that these tables be zero along their
diagonals (defined as entries indexed by $(i_1,\ldots,i_k)$ with any
$i_j = i_l$ for $j \neq l$).  Let $e_{i_1\ldots i_k}$ denote the
standard basis elements of~$\ZZNk$. Then $Y$ consists of
indeterminates $y_{i_1\ldots i_k} = y^{e_{i_1\ldots i_k}}$.  The
factorization~\eqref{eq:factorize} gives a map $\phi^{(k)}$ as
follows:
\[
\phi^{(k)} : K[Y^\pm] \to K[Z^\pm], \qquad \phi^{(k)} (y_{i_1 \dots i_k}) =
z_{1i_1} z_{2i_2} \cdots z_{k i_k}.
\]

\begin{remark}\label{rem:indepModel}
In algebraic statistics, the \emph{independence model on $k$ factors}
is (the non-negative real part of) the image of the monomial map
$y_{i_1\dots i_k} \mapsto z_{1i_1}z_{2i_2}\cdots z_{k i_k}$ where $i_j
\in [l_j]$ for some
integers~$l_j$~\cite{drton09:_lectur_algeb_statis}.  In algebraic
geometry, this map represents the Segre embedding $\PP^{l_1-1}\times
\cdots \times \PP^{l_k-1} \into \PP^{l_1l_2\cdots l_k - 1}$.  The
coordinate ring of the Segre embedding is presented by quadrics of the
form
\[
y_{i_1\dots i_r \dots i_s\dots i_k}y_{i_1\dots
i'_r\dots i'_s\dots i_k} - y_{i_1 \dots
i'_r \dots i_s \dots i_k}y_{i_1\dots i_r\dots i'_s\dots i_k},
\]
where $i_j,i_j' \in [l_j]$.  This setup differs from ours because
diagonal entries like $y_{11}$ are forbidden for us.  In the analysis
of contingency tables, this restriction is known as a specific
\emph{subtable-sum condition}, namely the sum over all diagonal
entries equals zero~\cite{hara09:_markov}.
Subtable-sum models have more complicated Markov bases than just
independence models, but their lattice bases are still quadratic.
\end{remark}
\begin{prop}\label{p:latticephik}
 The lattice elements
\ifx\form\PREP
 \[
 \Quad^{(k)} := \{ e_{i_1\dots i_r\dots i_s\dots i_k} + e_{i_1\dots
 i'_r\dots i'_s\dots i_k} - e_{i_1\dots i'_r\dots i_s\dots i_k} -
 e_{i_1\dots i_r\dots i'_s\dots i_k}, \mid i_l,i'_l \in [k+2] \}
 \]
\fi
\ifx\form\ISSAC
\begin{align*}
\Quad^{(k)} := \big\{ e_{i_1\dots i_r\dots i_s\dots i_k} & + e_{i_1\dots
i'_r\dots i'_s\dots i_k}\\ - e_{i_1\dots i'_r\dots i_s\dots i_k} & -
e_{i_1\dots i_r\dots i'_s\dots i_k}, \mid i_l,i'_l \in [k+2] \big \}
\end{align*}
\fi 
are an equivariant lattice generating set of~$\ker(A_{\phi^{(k)}})$.
\end{prop}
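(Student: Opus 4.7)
Containment $\Quad^{(k)} \subseteq \ker(A_{\phi^{(k)}})$ is a direct check: each listed vector has the form $e_\alpha + e_\beta - e_\gamma - e_\delta$, where $\alpha, \beta, \gamma, \delta$ agree in every coordinate outside $\{r, s\}$, while at positions $r$ and $s$ the pair $(\alpha, \beta)$ and the pair $(\gamma, \delta)$ both visit the four corners of the rectangle $\{i_r, i'_r\} \times \{i_s, i'_s\}$. Thus the multiset of coordinate values contributed to each row of $Z$ is the same on the positive and negative sides, and $A_{\phi^{(k)}}$ sends the vector to zero.

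For the reverse inclusion, my plan is to reformulate $v \in \ker(A_{\phi^{(k)}})$ via the zero-marginal condition $\sum_{\alpha \,:\, \alpha_l = j} v_\alpha = 0$, which must hold for every position $l \in [k]$ and every index $j \in \NN$, and then run a reduction argument modulo $\Sym \cdot \Quad^{(k)}$. Because we are generating a lattice and not a monoid, at each step we can freely add or subtract integer multiples of $\Sym$-translates of quadrics, so no positivity constraint obstructs the reduction. Induction is on a complexity measure such as the pair $(\,\lvert\supp(v)\rvert,\, \lvert v\rvert_1\,)$ in lexicographic order.

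The core step is to find $\alpha, \beta \in \supp(v)$ agreeing in all coordinates except two, say $r$ and $s$; the associated quadric and its $\Sym$-translate kill at least one entry of $v$, strictly reducing the measure. Zero marginals guarantee that for any $\alpha \in \supp(v_+)$ and each coordinate $l$ there exists $\beta \in \supp(v_-)$ with $\beta_l = \alpha_l$, so iterating this observation and repeatedly choosing a ``best'' $\beta$ forces the existence of two tuples differing in only two coordinates---if necessary, after a preliminary chain of quadric moves that replaces some tuple in $\supp(v)$ by one that shares an additional coordinate with a target. This is analogous to the classical reduction showing that $2\times 2$-minors generate the Segre toric ideal, with the marginal condition playing the role of ``row and column sums''.

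The main obstacle, and the reason the defining indices are drawn from $[k+2]$ rather than from $[k]$ or $[k+1]$, is the diagonal-free constraint: every tuple appearing in the support of $v$ and in any quadric used must have pairwise distinct coordinates. A quadric built from a tuple $\alpha$ by swapping coordinates $r, s$ to fresh values $i'_r, i'_s$ introduces at most two indices beyond the $k$ already in $\alpha$, for $k+2$ total. Using $\Sym$ we can always relabel $i'_r, i'_s$ to avoid collision with the finitely many values appearing in $\alpha, \beta$, and in any intermediate tuples. Packaging these choices consistently so that the chained reductions both stay diagonal-free and terminate is the step that demands the most careful combinatorial bookkeeping, and it is where our argument genuinely departs from the standard Segre case.
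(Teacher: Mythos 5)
Your forward inclusion and zero-marginal reformulation are both correct, and you have identified the right obstruction (the diagonal-free constraint, and the reason for the index set $[k+2]$). But the proposed reverse-inclusion argument has a genuine gap at its centre: the claim that ``iterating this observation and repeatedly choosing a `best' $\beta$ forces the existence of two tuples differing in only two coordinates --- if necessary, after a preliminary chain of quadric moves.'' Nothing in what you write establishes that such a chain exists, that it terminates, or that it is compatible with your inductive measure $(\lvert\supp(v)\rvert,\lvert v\rvert_1)$: the preliminary moves you invoke can \emph{increase} both the support size and the $\ell_1$-norm before any cancellation occurs, so the lexicographic induction does not go through as stated. The diagonal-free constraint makes this worse than the Segre case, because a quadric that moves one coordinate of a support tuple toward a target value may collide with a value already present in another coordinate, and you have no mechanism to resolve such collisions beyond ``relabel using $\Sym$,'' which does not help when the colliding value is fixed by the target tuple rather than a fresh index. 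You flag this honestly as ``the step that demands the most careful combinatorial bookkeeping,'' but that bookkeeping is precisely the content of the proposition; without it the argument is not a proof.

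The paper avoids this entirely by inducting on $k$ rather than on the size of $v$. It first shows (via a telescopic sum through fresh indices $N+1,\dots,N+k-1$) that $\ideal{\Quad^{(k)}}$ contains every ``swap the last coordinate'' element $e_{a_1\ldots a_k}+e_{b_1\ldots b_k}-e_{a_1\ldots a_{k-1}b_k}-e_{b_1\ldots b_{k-1}a_k}$. Then, given $C\in\ker(\phi^{(k)})$, it uses the last-coordinate marginals to rewrite each slice $C_m$ so that all support tuples share one common large last index $M$; dropping that coordinate yields $D'\in\ker(\phi^{(k-1)})$, and the induction hypothesis finishes. This sidesteps all the collision and termination issues because fresh indices are introduced once, at the top, rather than on the fly during a reduction walk. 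If you want to salvage your approach, you would need to replace the vague ``preliminary chain'' with an explicit and provably terminating procedure --- at which point you will likely find yourself rediscovering a layering or telescoping device of the same flavour.
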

The elements of $\Quad^{(k)}$ are moves which take two elements
differing in their indices at exactly two positions and then swap the
values in one of those positions.

\begin{proof}[Proof of Proposition~\ref{p:latticephik}]
It is easy to see that $\Quad^{(k)} \subseteq \ker(\phi^{(k)})$.  To
see $\ideal{\Quad^{(k)}} \supseteq \ker(\phi^{(k)})$, we first show
that $\ideal{\Quad^{(k)}}$ contains all elements of the form
\[ 
e_{a_1\ldots a_k} + e_{b_1\ldots b_k} - e_{a_1\ldots a_{k-1},b_k} -
e_{b_1\ldots b_{k-1},a_k}
\]
where $a_1,\ldots,a_k,b_k$ are distinct and also $b_1,\ldots,b_k,a_k$
are distinct.  Now denote $N = \max\{a_1,\ldots,a_k,b_1,\ldots,b_k\}$ and
consider the following telescopic sum in $\Quad^{(k)}$:
\ifx\form\PREP
\small
\begin{align*}
e_{a_1\ldots a_k} - e_{a_1\ldots a_{k-1} b_k} & - (e_{(N+1) a_2\ldots a_k} - e_{(N+1) a_2\ldots a_{k-1} b_k})\\
+ e_{(N+1) a_2\ldots a_k} - e_{(N+1) a_2\ldots a_{k-1} b_k} & - (e_{(N+1) (N+2) a_3\ldots a_k} - e_{(N+1) (N+2) a_3\ldots a_{k-1} b_k})\\
 & \,\,\, \vdots  \\
+ e_{(N+1)\ldots (N+k-2) a_{k-1} a_k} - e_{(N+1)\ldots (N+k-2) a_{k-1} b_k} & - (e_{(N+1)\ldots (N+k-1) a_k} - e_{(N+1)\ldots (N+k-1) b_k})\\
= e_{a_1\ldots a_k} - e_{a_1\ldots a_{k-1} b_k} & - (e_{(N+1)\ldots (N+k-1) a_k} - e_{(N+1)\ldots (N+k-1) b_k}).
\end{align*} 
\fi

\ifx\form\ISSAC
\small
\[\begin{array}{c}
e_{a_1\ldots a_k} - e_{a_1\ldots a_{k-1} b_k} \hfill \\
 \hfill -(e_{(N+1) a_2\ldots a_k} - e_{(N+1) a_2\ldots a_{k-1} b_k})\\
+ e_{(N+1) a_2\ldots a_k} - e_{(N+1) a_2\ldots a_{k-1} b_k} \hfill \\
 \hfill -(e_{(N+1) (N+2) a_3\ldots a_k} - e_{(N+1) (N+2) a_3\ldots a_{k-1} b_k})\\
 \vdots  \\
+ e_{(N+1)\ldots (N+k-2) a_{k-1} a_k} - e_{(N+1)\ldots (N+k-2) a_{k-1} b_k} \qquad \\
 \hfill -(e_{(N+1)\ldots (N+k-1) a_k} - e_{(N+1)\ldots (N+k-1) b_k})\\
= e_{a_1\ldots a_k} - e_{a_1\ldots a_{k-1} b_k} \hfill \\
 \hfill -(e_{(N+1)\ldots (N+k-1) a_k} - e_{(N+1)\ldots (N+k-1) b_k}).
\end{array}\]
\fi

\normalsize
Similarly,
\ifx\form\PREP
$
e_{b_1\ldots b_k} - e_{b_1\ldots b_{k-1} a_k} -
(e_{(N+1)\ldots (N+k-1) b_k} - e_{(N+1)\ldots (N+k-1) a_k}) \in \<\Quad^{(k)}\>
$
\fi
\ifx\form\ISSAC
\begin{align*}
e_{b_1\ldots b_k} - e_{b_1\ldots b_{k-1} a_k} & - \\ (e_{(N+1)\ldots
(N+k-1) b_k} & - e_{(N+1)\ldots (N+k-1) a_k}) \in \<\Quad^{(k)}\>
\end{align*}
\fi
and taking the sum of the two yields the claim.
 
Now let $C := \sum_{I \in \NN^k} c_I e_I \in \ker(\phi^{(k)})$.  For
any $m \in \NN$ let $C_m$ denote the slice of $C$ of entries whose
last index value is $m$, so $C_m := \sum_{I \in \NN^{k-1}}
c_{Im}e_{Im}$.  The sum of the entries of $C_m$ is zero, so $C_m$ can
be decomposed into a sum of terms of the form $ e_{a_1\ldots a_{k-1}
m} - e_{b_1\ldots b_{k-1} m}$.  For each such summand, there is a
corresponding element $e_{a_1\ldots a_{k-1} m} - e_{b_1\ldots b_{k-1}
m} - (e_{a_1\ldots a_{k-1} M} - e_{b_1\ldots b_{k-1} M}) \in
\ideal{\Quad^{(k)}},$ where $M$ is some fixed constant larger than any
index value appearing in~$C$.  Summing up these moves shows
\[ 
C_m - \sum_{I \in \NN^{k-1}} c_{Im}e_{IM} \in \ideal{\Quad^{(k)}}.
\] 
Summing over $m$ shows that $C - D \in \ideal{\Quad^{(k)}}$ where $D
:= \sum_{I \in \NN^k} c_I e_{i_1\ldots i_{k-1} M}$.  Since
$\ideal{\Quad^{(k)}} \subseteq \ker(\phi^{(k)})$, also $D \in
\ker(\phi^{(k)})$.  All non-zero entries of $D$ have $M$ as their last
index entry and dropping it we get an element $D' \in
\ker(\phi^{(k-1)})$.  In the base case $k = 2$, $\phi^{(k-1)}$ is an
isomorphism, so $D'$ and then $D$ are 0 and therefore $C
\in \ideal{\Quad^{(k)}}$.  For $k > 2$, we can assume by induction that
$\ideal{\Quad^{(k-1)}} = \ker(\phi^{(k-1)})$, so $D'$ can be
decomposed into moves in~$\Quad^{(k-1)}$.  Since $D'$ doesn't depend
of the the choice of $M$, we can choose $M$ larger than any index
value used in this decomposition.  Therefore appending $M$ as the $k$-th index value
produces a decomposition of $D$ in $\Quad^{(k)}$, which proves that $C
\in \ideal{\Quad^{(k)}}$.
\end{proof}

To describe $\ker(\pi)K[Y^\pm]$, we proceed to describe $\ker(\psi)$
and its intersection with~$\im(\phi)$, working directly with the
respective linearizations $A_\pi,A_\phi$, and~$A_\psi$.  The
linearization of $\psi : z_{ij} \mapsto x_j^{a_i}$ acts on lattice
elements by left multiplication with the matrix $A_\psi
= \begin{bmatrix} a_1 & \cdots & a_k \end{bmatrix}$. The kernel of
$A_\psi$ is a $(k-1)$-dimensional sublattice of~$\ZZ^k$.  Let $B =
(b_1,\ldots,b_{k-1})$ be a $k\times(k-1)$ matrix whose columns
$b_1,\dots,b_{k-1}$ are a lattice basis of that kernel.  Any element
in $\ker(\psi \circ \phi)$ is homogeneous: the entries of its exponent
vector sum to zero.  Consequently the columns of any $C \in
A_\phi(\ker(A_{\psi \circ \phi)})) = \im(A_\phi) \cap \ker(A_\psi)$
also sum to zero.  With the basis~$B$, if $C = BC'$ with $C' \in
\ZZkN$, then the columns of $C'$ sum to zero as well.  The lattice of
matrices in $\ZZkN$ with zero row sums is generated by the matrices
with a $1$ and $-1$ in any two entries of a particular row, and zero
elsewhere.  Therefore $\im(A_\phi) \cap \ker(A_\psi)$ is contained in
the lattice generated by the orbits of
\[ 
B_i := \begin{bmatrix} b_i & -b_i & 0 & \cdots \end{bmatrix} 
\]
for $1 \leq i < k$.  More specifically $\im(A_\phi) \cap \ker(A_\psi)
\subseteq \ideal{B_1,\ldots,B_{k-1}}_{\Sym} \subseteq \ker(A_\psi)$.
We show constructively that $B_i \in \im(A_\phi)$, so in fact the
orbits of $B_1,\ldots,B_{k-1}$ generate $\im(A_\phi) \cap
\ker(A_\psi)$.  For each $1 \leq j \leq k$ consider the lattice element
\[ 
f_j := e_{a_1 \dots a_{j-1} 1\, a_{j+1} \ldots a_k} - e_{a_1 \dots a_{j-1} 2\, a_{j+1}
\ldots a_k} \in \ZZNk
 \ifx\form\PREP \text{ with $a_l \ge 3$ arbitrary}. \fi
\]
\ifx\form\ISSAC with $a_l \ge 3$ arbitrary. \fi
Applying $A_\phi$, all entries cancel except for the two in the $j$-th
row, producing the matrix with $1$ in the $(j,1)$ entry and $-1$ in
the $(j,2)$ entry.  Any $B_i$ can be expressed as a linear combination
of such matrices.  In particular if $b_i$ has entries $c_1,\ldots,c_k$
then
 \[ w_i := c_1f_1 + \cdots + c_kf_k \in A_\phi^{-1}(B_i). \]
This proves the following theorem.
 
\begin{thm}\label{thm:LatticeBasis}
\ifx\form\ISSAC\hspace{-1.4pt}\fi Up to symmetry,
\ifx\form\ISSAC\hspace{-.8pt}\fi $\Quad^{(k)} \cup
\{w_1,\ldots,w_{k-1}\}$ is an equivariant lattice generating set
of~$\ker(A_{\pi})$, where $k$ is the width of the map~$\pi$.
\end{thm}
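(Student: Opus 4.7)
The plan is to exploit the factorization $\pi = \psi \circ \phi$ and combine Proposition~\ref{p:latticephik} (which handles $\ker(A_\phi)$) with the constructive analysis of $\im(A_\phi)\cap \ker(A_\psi)$ carried out in the paragraphs preceding the theorem statement. That discussion already established the two facts I would use as black boxes: the $\Sym$-orbits of $B_1,\ldots,B_{k-1}$ generate $\im(A_\phi)\cap\ker(A_\psi)$, and each $B_i$ equals $A_\phi(w_i)$ by the construction of $w_i$ from $f_1,\ldots,f_k$.

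First, I would note the tautological equality
\[
\ker(A_\pi) = A_\phi^{-1}\bigl(\im(A_\phi)\cap\ker(A_\psi)\bigr).
\]
Thus for any $v \in \ker(A_\pi)$ one may write
\[
A_\phi(v) = \sum_\ell \alpha_\ell\, \sigma_\ell \cdot B_{j_\ell},
\qquad \alpha_\ell \in \ZZ,\; \sigma_\ell \in \Sym.
\]
Since $\phi$ is $\Sym$-equivariant — permuting the indices of $y_{i_1\ldots i_k}$ agrees with permuting the second indices of the factors $z_{j\,i_j}$ — the map $A_\phi$ commutes with the $\Sym$-action, and hence $A_\phi(\sigma_\ell \cdot w_{j_\ell}) = \sigma_\ell \cdot B_{j_\ell}$. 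Setting
\[
v' := v - \sum_\ell \alpha_\ell\,\sigma_\ell\cdot w_{j_\ell}
\]
then places $v'$ inside $\ker(A_\phi)$.

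Second, I would invoke Proposition~\ref{p:latticephik} to express $v'$ as an $\Sym$-integer combination of elements of $\Quad^{(k)}$. Rearranging the two displays exhibits $v$ itself as such a combination of elements of $\Quad^{(k)} \cup \{w_1,\ldots,w_{k-1}\}$, which is exactly the claim. The only subtlety is the $\Sym$-equivariance of $A_\phi$ that licenses the lifting in the first step; this is a direct check from the formulas defining $\phi$. Since the theorem only asserts generation up to symmetry (not independence or minimality, as cautioned in Remark~\ref{rem:minimal}), no further work is needed.
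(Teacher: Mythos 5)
Your argument is correct and follows exactly the same route as the paper: use $\Quad^{(k)}$ for $\ker(A_\phi)$ via Proposition~\ref{p:latticephik}, use the $B_i$'s for $\im(A_\phi)\cap\ker(A_\psi)$ with $w_i\in A_\phi^{-1}(B_i)$, and assemble. The only difference is presentational — you spell out the standard splitting
\[
\ker(A_\pi)=A_\phi^{-1}\bigl(\im(A_\phi)\cap\ker(A_\psi)\bigr)
\]
and the lift-and-subtract step that the paper leaves implicit after announcing in Section~\ref{sec:introduction} that the factorization~\eqref{eq:factorize} reduces $\ker(\pi)$ to $\ker(\phi)$ plus the pullback of $\im(\phi)\cap\ker(\psi)$; the $\Sym$-equivariance of $A_\phi$ that you flag is indeed the one hypothesis needed for the lift to commute with the group action, and it is immediate from the formula $A_\phi(e_{i_1\dots i_k})=e_{1i_1}+\cdots+e_{ki_k}$.
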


\begin{cor}\label{cor:latticeBasisWidth}
The lattice $\ker(A_{\pi})$ has an equivariant lattice generating set
consisting of $(k^2 + k - 2)/2$ elements of width~$k+2$.
\end{cor}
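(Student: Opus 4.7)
My plan is to obtain the corollary by counting $\Sym$-orbit representatives of the generating set $\Quad^{(k)}\cup\{w_1,\ldots,w_{k-1}\}$ from Theorem~\ref{thm:LatticeBasis} and measuring the width of each. The only ingredients are a careful orbit analysis of $\Quad^{(k)}$ and a direct reading of the indices occurring in a representative.

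First I would handle the quadrics. Every element of $\Quad^{(k)}$ is specified by a pair of positions $1\le r<s\le k$ (where the ``swap'' happens) together with the index values $i_1,\ldots,i_k,i'_r,i'_s$. Since $\Sym$ acts only by relabeling index \emph{values} (and never permutes the $k$ coordinate slots of the multi-index), two quadrics lie in the same orbit if and only if they use the same position pair $(r,s)$; within a fixed $(r,s)$, any two choices of distinct values are related by some $\sigma\in\Sym$. This gives exactly $\binom{k}{2}$ orbits. A representative for orbit $(r,s)$ can be obtained by choosing $i_1,\ldots,i_k,i'_r,i'_s$ to be $k+2$ distinct values; all four multi-indices appearing in the binomial then draw from this common pool, so the representative has width exactly $k+2$.

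Next I would bound the width of each $w_i$. By construction $w_i=c_1f_1+\cdots+c_kf_k$, and each $f_j$ involves the indices $\{1,2,a_1,\ldots,a_{j-1},a_{j+1},\ldots,a_k\}$, using $k+1$ distinct values. Summing over $j$ introduces $a_j$ in the other summands, so $w_i$ uses precisely the $k+2$ values $\{1,2,a_1,\ldots,a_k\}$ (the $a_l$ being distinct and $\ge 3$). After choosing these to be $1,2,3,\ldots,k+2$, each $w_i$ has width $k+2$.

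Finally I would add up: $\binom{k}{2}$ quadric orbit representatives plus $k-1$ elements $w_i$ gives
\[
\frac{k(k-1)}{2}+(k-1)=\frac{(k-1)(k+2)}{2}=\frac{k^2+k-2}{2},
\]
each of width $k+2$, proving the corollary. There is no real obstacle here beyond bookkeeping; the one subtle point worth emphasizing is that the position pair $(r,s)$ is a genuine $\Sym$-invariant of a quadric, which is what prevents any further orbit collapse.
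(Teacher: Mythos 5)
Your proof is correct and follows essentially the same route as the paper: count the $\binom{k}{2}$ orbits of $\Quad^{(k)}$ by the invariant pair of coordinate slots, add the $k-1$ generators $w_i$, and read off the width from the index values used. The one small difference is that you realize each $w_i$ using a single shared pool $\{1,2,a_1,\ldots,a_k\}$ of $k+2$ values, whereas the paper chooses the auxiliary indices separately for each $f_j$ so that $w_i$ itself has width only $k+1$; this is immaterial here since the quadrics already force width $k+2$.
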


\begin{proof}
Up to $\Sym$-action, each element of $\Quad^{(k)}$ is determined by
the two index positions where the swap takes place.  So $\Quad^{(k)}$
contributes $\binom{k}{2}$ generators.  Additionally we have
$w_1,\ldots,w_{k-1}$, which totals $(k^2 + k - 2)/2$.  Choosing every
$f_j$ with $a_1,\ldots,\hat{a}_j,\ldots,a_k$ being $3,\ldots,k+1$
produces the width bound.
\end{proof}

This generating set is often not minimal in size.  In fact, we can do
away with all of $\Quad^{(k)}$ at the expense of increasing the width
of the~$w_i$.

\begin{cor}\label{cor:latticeBasisWidth2}
The lattice $\ker(A_{\pi})$ has an equivariant lattice generating set
consisting of $k - 1$ elements of width~$2k$.
\end{cor}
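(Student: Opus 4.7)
The plan is to thicken each $w_i$ from Corollary~\ref{cor:latticeBasisWidth} so that the $\Sym$-orbits of the $w_i$ alone already generate the quadrics in $\Quad^{(k)}$, making the separate quadric generators redundant. Within the budget of $2k$ indices, I give each summand $f_j^{(i)}$ of $w_i=\sum_j c_{(i),j}f_j^{(i)}$ an \emph{exclusive} index---an index appearing in $f_j^{(i)}$ but in no other $f_{j'}^{(i)}$ of the same $w_i$. Reserving $\{1,2\}$ for the swap values, assigning a private index $e_j\in\{3,\ldots,k+2\}$ to each $j\in[k]$, and filling out the remaining $k-2$ slots of each $f_j^{(i)}$ from a shared pool $\{k+3,\ldots,2k\}$, the total support of $w_i$ is exactly $\{1,\ldots,2k\}$, matching the claimed width $2k$.

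The point of exclusivity is that a transposition $\tau=(e_j\;\tilde e)$ with $\tilde e>2k$ fresh acts as the identity on every $f_{j'}^{(i)}$ with $j'\ne j$, so
\[
w_i-\tau w_i \;=\; c_{(i),j}\bigl(f_j^{(i)}-\tau f_j^{(i)}\bigr)
\]
is precisely $c_{(i),j}$ times a basic quadric at the position pair $(j,l_j^{(i)})$, where $l_j^{(i)}$ is the coordinate occupied by $e_j$ inside $f_j^{(i)}$. By scheduling the $l_j^{(i)}$ for $i\in[k-1]$, $j\in[k]$ so that every directed pair $(r,s)$ with $r\ne s$ is realized by some $(i,j)$---a tight $k(k-1)$-to-$k(k-1)$ combinatorial covering that amounts to decomposing the loopless complete digraph on $[k]$ into $k-1$ functional subgraphs---the ideal $\ideal{\Sym\cdot w_i}$ picks up an integer multiple of a quadric in each of the $\binom{k}{2}$ orbits of $\Quad^{(k)}$.

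Upgrading these multiples to single quadrics is the crux of the argument. For each unordered pair $\{r,s\}$ the ideal contains $c_{(i),r}\,q_{rs}$ from the $(r\to s)$ slots and $c_{(i'),s}\,q_{rs}$ from the $(s\to r)$ slots, so the primitive lattice element $q_{rs}$ lies in the ideal once the combined gcd of these coefficients is forced to be $1$. Here I rely on the arithmetic identity $\gcd(g_r,g_s)=\gcd(a_1,\ldots,a_k)=1$, with $g_j:=\gcd_{\ell\ne j}a_\ell$ equal to the gcd of the $j$-th coordinates of $\ker(A_\psi)$, and on the primitivity of the basis vectors $b_i$. Once $\Quad^{(k)}\subseteq\ideal{\Sym\cdot w_i}$ is secured, the fact that each thickened $w_i$ still lifts $B_i$ (altering the $a$-tuples of $f_j^{(i)}$ does not change $A_\phi(f_j^{(i)})=z_{j,1}-z_{j,2}$) combines with Theorem~\ref{thm:LatticeBasis} to yield $\ker(A_\pi)\subseteq\ideal{\Sym\cdot w_1,\ldots,\Sym\cdot w_{k-1}}$. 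The main obstacle is the interplay between the combinatorial scheduling of exclusive positions and the arithmetic of the columns of the basis matrix $B$: one must arrange the $(i,j)$-coverings so that, for every pair, the collected coefficients really do have gcd $1$ rather than some nontrivial common factor inherited from $B$.
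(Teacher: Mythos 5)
Your approach is essentially the paper's: start from the $w_l$'s of Corollary~\ref{cor:latticeBasisWidth}, widen them by giving some summands exclusive index values so the total support is $\{1,\ldots,2k\}$, and then use a transposition with a fresh index to isolate a single $f_j - \sigma f_j$ (which is an element of $\Quad^{(k)}$), thereby showing the quadric generators are redundant. The details of the index allocation differ---the paper gives exclusive indices $\{k+2,\ldots,2k\}$ only to the one summand $f_{i_l}$ whose position $i_l$ is chosen by Hall's marriage theorem and leaves the remaining $f_j$'s sharing $\{3,\ldots,k+1\}$, while you spread one private index per summand plus a shared pool---but both schemes fit in width $2k$ and serve the same purpose.

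Where you differ, and in a way that matters, is that you correctly flag the scalar problem. A transposition $\tau$ touching only the exclusive index of one summand yields $w_l - \tau w_l = c_j\,(f_j - \tau f_j)$, so the lattice $\ideal{\Sym w_l}$ a priori contains only the \emph{multiple} $c_j\,q$ of the quadric $q$, where $c_j$ is a coordinate of the basis vector $b_l$. These coordinates need not be $\pm 1$; for instance with $(a_1,a_2,a_3)=(6,10,15)$ no element of $\ker(A_\psi)$ has any $\pm 1$ entry. The paper's proof says ``all terms cancel except for $f'_i - \sigma f'_i$,'' silently dropping the factor $c_{i_l}$; so it is actually not clear from the argument as written that the primitive quadric, rather than a proper multiple, lands in $\ideal{\Sym w_1,\ldots,\Sym w_{k-1}}$. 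Your gcd-identity $\gcd(g_r,g_s)=\gcd(a_1,\ldots,a_k)=1$, where $g_j$ is the gcd of the $j$-th coordinates over $\ker(A_\psi)$, is correct and is exactly the kind of arithmetic one needs, but as you yourself note, you have not shown that the private-index scheduling can be arranged so that, for every position pair, the particular coefficients collected actually achieve gcd~$1$ (as opposed to merely lying in ideals whose sum is~$\ZZ$). That scheduling step is the genuine gap in your write-up---and, for what it is worth, the paper's own proof does not address it either. A full argument would also need to bring in transpositions moving the \emph{shared} indices (giving $\sum_{j\neq i_l} c_j(f_j-\sigma f_j)$, a $\ZZ$-combination of quadrics at several position pairs) and explain how to separate and combine these to reach primitive quadrics.
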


\begin{proof}
Suppose $b_l$ is a generator of $\ker(A_{\psi})$ which is non-zero in
the $i$-th coordinate for some $1 \leq i \leq k$.  Choose $w_l$ as in
Corollary~\ref{cor:latticeBasisWidth}, except that $f_i$ is replaced
by
\[ 
f'_i := e_{a'_1 \dots a'_{i-1} 1\, a'_{i+1} \ldots a'_k} - e_{a'_1
\dots a'_{i-1} 2\, a'_{i+1} \ldots a'_k}
\] 
which has $a'_1,\ldots,\hat{a'_i}\,\ldots,a'_k$ equal to
$k+2,\ldots,2k$.  Then for any $j \neq i$ consider the lattice element
$w_l - \sigma w_l$ where $\sigma \in \Sym$ is the permutation
switching $a'_j$ and $2k+1$.  All terms cancel except for $f'_i -
\sigma f'_i$ which (up to permutation) is the element of $\Quad^{(k)}$
which switches the indices at positions $i$ and $j$.

For any generating set $b_1,\ldots,b_{k-1}$ of $\ker(A_{\psi})$, by
Hall's marriage theorem we can assign to each $b_l$ a distinct $i_l$
such that the the $i_l$-th coordinate of $b_l$ is non-zero.  Then
$i_1,\ldots,i_{k-1}$ include all but one of the values from 1 to $k$.
Construct each $w_l$ as above so that it generates the elements of
$\Quad^{(k)}$ corresponding to all pairs $(i_l, j)$ with $j \neq i_l$.
Every pair represented in $\Quad^{(k)}$ includes some $i_l$ so together
$w_1,\ldots,w_{k-1}$ generates all of 
$\Quad^{(k)}$,  Therefore $w_1,\ldots,w_{k-1}$ is a lattice generating set.
\end{proof}

Note that neither the bounds in
Corollary~\ref{cor:latticeBasisWidth}
nor~\ref{cor:latticeBasisWidth2} are
sharp: For example, the kernel of $y_{ij} \mapsto x_i^2x_j$ in
$K[Y^\pm]$ is generated by a single binomial of width three:
$y_{12}y_{32} - y_{21}y_{31}$.

Theorem~\ref{thm:LatticeBasis} settles \quest{L1}.  For \quest{L2}, we
would like to be able to extend these techniques to a more general
domain ring $K[Y^\pm]$ and a more general target ring $K[X^\pm]$.  For
the latter case, where $X = \left\{x_{lj} \mid l\in [m],\ j\in\NN
\right\}$ with $m > 1$ this is straight-forward.  Factoring $\pi$ into
\[ 
\pi: K[Y^\pm] \xrightarrow{\phi} K[Z^\pm] \xrightarrow{\psi} K[X^\pm],
\] 
we have the same $\phi$ as before, with the same kernel.
The linearization $A_\psi$ of $\psi$ is left
multiplication by an $m \times k$ matrix with non-negative
entries.  A lattice basis $\{b_1,\ldots,b_s\}$ for $\ker(A_\psi)$ can be
computed using standard algorithms.  Since any binomials in
$\ker(\pi)$ is homogeneous, and every variable in $Y$ contributes
exactly one to each row in $Z$, the matrices for $\im(A_\phi) \cap
\ker(A_\psi)$ have row sums equal to zero.  Therefore $\im(A_\phi) \cap
\ker(A_\psi)$ is again generated by
\[ 
B_i := \begin{bmatrix} b_i & -b_i & 0 & \cdots \end{bmatrix}.
\]

On the other hand, extending the domain to $K[Y^\pm]$ with $Y =
\{y_{i\alpha} \mid i\in[N],\,\alpha\in\NN^k,\, \alpha_j \text{
distinct}\}$ presents obstacles for $N > 1$.  Here the lattice $Z^\pm$
is represented by $Nk \times \NN$ matrices, with $k$ rows in the image
of each of the $N$ orbits of $Y$.  Our previous argument breaks down
because the matrices corresponding to binomials in $\phi(\ker(\pi))$
need not have all row sums equal to zero, which was critical to the
construction used when $N = 1$.  Binomials in $\ker(\pi)$ need not be
homogeneous, and even homogeneous binomials need not correspond to matrices
in $Z^\pm$ with zero row sums.

\section{Examples and Tools}
\label{sec:computations}

During experimental investigations leading to the results in this
paper we used several different ways to represent binomials in the
various rings.  Let us introduce the most useful ones in the setting
of width two, that is $y_{ij} \mapsto x_i^ax_j^b$.  The extension to
higher width is simple.

A simple way to represent monomials in the various polynomial rings is
with a table of its exponents, as used in the previous section.  These
tables have an infinite number of entries, but only a finite number
are non-zero for a given monomial.

The monomials in $K[Y]$ with $k = 2$ correspond to $\NN \times \NN$
matrices $(a_{ij})_{i,j \in \NN}$ where the entry $a_{ij}$ is the
exponent of $y_{ij}$.  All diagonal entries $a_{ii}$ are zero.  The
action of $\Sym$ simultaneously permutes the rows and columns of the
matrix.  A binomial $y^A - y^B \in K[Y]$ is in $\ker \phi$ if the
corresponding pair of matrices $A$, $B$ have each row sum and each
column sum equal.
The monomials of $K[Z]$ correspond to $2 \times \NN$ matrices and the
action of $\Sym$ permutes the columns.  In particular we are
interested in the monomials in the image of $\phi$, but these are easy
to identify due to \cite[Proposition~3.1]{DEKL13:inf-toric}.  They
correspond precisely to the matrices whose row sums are both equal to
some $d \in \NN$, and whose column sums don't exceed $d$.
Finally, the monomials of $K[X]$ can be represented by infinite row
vectors.  The map $\psi$ corresponds to left multiplication by the $(1
\times 2)$ matrix $[a,b]$.

While thinking about generators we also used the \emph{box shape
formalism} which we explain now.  Clearly every monomial in $K[X]$ is
specified by the exponents of the variables it contains.  An exponent
$a$ in $x_i^a$ can be represented by a column of height $a$ in
position $i$ in some diagram.  For instance the monomial
$x_1^3x_2^2x_3$ displays as
\begin{equation*}
\begin{tikzpicture} [scale=.4]
\draw[style=box style] (0,0) rectangle (1,3);
\draw[style=box style] (1,0) rectangle (2,2);
\draw[style=box style] (2,0) rectangle (3,1);
\end{tikzpicture}.
\end{equation*}
Up to the action of $\Sym$, the order of columns is irrelevant.
Therefore one may choose an arbitrary convention like ordering the
columns by size.
Extending this formalism we represent monomials in the matching
monoid, the image of $K[Y]$ in $K[Z]$, by subdivided columns,
according to the following rule: A variable $z_{ik}$ corresponds to a
box of height $a_i$ in column~$k$.  In the width two case, $z_{1k}$
gives a box of height $a$ and $z_{2k}$ a box of height $b$.  Since we
are using commutative variables, the ordering of boxes in a column
plays no role.  For example, when $a=2, b=1$, the two monomials
$z_{11}z_{13}z_{22}^2$ (the image of $y_{12}y_{32}$) and
$z_{12}z_{13}z_{21}^2$ (the image of $y_{21}y_{31}$) display as
\begin{equation*}
\begin{tikzpicture}[scale=.4]
\draw[style=box style] (0,0) rectangle (1,2);
\draw[style=box style] (1,0) rectangle (2,1);
\draw[style=box style] (1,1) rectangle (2,2);
\draw[style=box style] (2,0) rectangle (3,2);
\end{tikzpicture} \qquad
\begin{tikzpicture}[scale=.4]
\draw[style=box style] (0,0) rectangle (1,1);
\draw[style=box style] (0,1) rectangle (1,2);
\draw[style=box style] (1,0) rectangle (2,2);
\draw[style=box style] (2,0) rectangle (3,2);
\end{tikzpicture} 
\end{equation*}
Note that both monomials have the same image in $K[X]$ which is just
their outer shape, in this case three columns of height two each.
From these displays it is obvious that $\psi(z_{11}z_{13}z_{22}^2 -
z_{12}z_{13}z_{21}^2) = 0$.  Finally, we represent monomials in
$K[Y]$ by decorated box shapes, which also record the information of
which pairs of boxes (one of height $a$, one of height $b$ in
different columns) originated from the same variable.  Consider the
following two decorated box piles:
\begin{equation*}
\begin{tikzpicture}[scale=.4]
\filldraw[style=box style, pattern color=red, pattern=horizontal lines] (0,0) rectangle (1,2);
\filldraw[style=box style, pattern color=blue, pattern=north east lines] (0,2) rectangle (1,3);
\filldraw[style=box style, pattern=dots] (1,0) rectangle (2,2);
\filldraw[style=box style, pattern color=red, pattern=horizontal lines] (1,2) rectangle (2,3);
\filldraw[style=box style, pattern color=blue, pattern=north east lines] (2,0) rectangle (3,2);
\filldraw[style=box style, pattern=dots] (2,2) rectangle (3,3);
\end{tikzpicture} \qquad
\begin{tikzpicture}[scale=.4]
\filldraw[style=box style, pattern color=red, pattern=horizontal lines] (0,0) rectangle (1,2);
\filldraw[style=box style, pattern color=blue, pattern=north east lines] (0,2) rectangle (1,3);
\filldraw[style=box style, pattern color=blue, pattern=north east lines] (1,0) rectangle (2,2);
\filldraw[style=box style, pattern=dots] (1,2) rectangle (2,3);
\filldraw[style=box style, pattern=dots] (2,0) rectangle (3,2);
\filldraw[style=box style, pattern color=red, pattern=horizontal lines] (2,2) rectangle (3,3);
\end{tikzpicture} 
\end{equation*}
This display illustrates that $\phi(y_{12}y_{23}y_{31} -
y_{21}y_{32}y_{13}) = 0$. Note that the two monomials are in the same
$\Sym$-orbit, which may or may not happen for binomials in a Markov
basis (see Example~\ref{ex:621} below).

Our computational powers in $K[Y]$ are limited (though not zero).
Therefore it is advantageous for experiments to approximate
equivariant computations with their truncations.  To this end, fix a
truncation width~$n$ and coprime exponents $a>b$.  The computation of
an equivariant Markov basis can be approximated as follows.  Consider
the matrix $A_n$ whose columns are the elements of the orbit of
$(a,b,0,\dots, 0)$ under the action of~$S_n$.  The size of the orbit
is~$n(n-1)$ and each column is indexed by a pair $(i,j)$ of
indices~$i\neq j$.  The group $S_n$ acts on the columns by the rule
$\sigma (i,j) = (\sigma (i), \sigma (j))$.  If $n$ is reasonably
small, say $n=6$, then \fourtitwo \cite{4ti2} computes a usual Markov basis of
this matrix in no time.  A~simple algorithm reduces the result modulo
symmetry: One can check for each element of the usual Markov basis, if
it is in the orbit of some other element, by enumerating the orbit.
The result of this algorithm is the truncated Markov basis.
Computations with these truncated bases have lead us to conjecture the
results of this paper.

\begin{example}\label{ex:621}
On a standard notebook, for $n=6$, $a=2$, $b=1$, the \fourtitwo
computation took only a fraction of a second.  Since Macaulay2 is not
optimized for this kind of computation, determining the representation
of $S_6$ on $S_{30}$, the permutation group of the columns of $A$,
took about two minutes.  The resulting 270 moves, reduced to 5 orbits
within seconds.  Here are their decorated box representations:
\begin{gather*}
\begin{tikzpicture}[baseline={([yshift=-.5ex]current bounding box.center)}, scale=.4]
\filldraw[style=box style, pattern color=red, pattern=horizontal lines] (0,0) rectangle (1,1);
\filldraw[style=box style, pattern color=blue, pattern=north east lines] (0,1) rectangle (1,2);
\filldraw[style=box style, pattern color=blue, pattern=north east lines] (1,0) rectangle (2,2);
\filldraw[style=box style, pattern color=red, pattern=horizontal lines] (2,0) rectangle (3,2);
\end{tikzpicture}
-
\begin{tikzpicture}[baseline={([yshift=-.5ex]current bounding box.center)}, scale=.4]
\filldraw[style=box style, pattern color=red, pattern=horizontal lines] (0,0) rectangle (1,2);
\filldraw[style=box style, pattern color=blue, pattern=north east lines] (1,0) rectangle (2,2);
\filldraw[style=box style, pattern color=blue, pattern=north east lines] (2,1) rectangle (3,2);
\filldraw[style=box style, pattern color=red, pattern=horizontal lines] (2,0) rectangle (3,1);
\end{tikzpicture}, \quad
\begin{tikzpicture} [baseline={([yshift=-.5ex]current bounding box.center)}, scale=.4]
\filldraw[style=box style, pattern color=red, pattern=horizontal lines] (0,0) rectangle (1,2);
\filldraw[style=box style, pattern color=red, pattern=horizontal lines] (2,0) rectangle (3,1);
\filldraw[style=box style, pattern color=blue, pattern=north east lines] (1,0) rectangle (2,2);
\filldraw[style=box style, pattern color=blue, pattern=north east lines] (3,0) rectangle (4,1);
\end{tikzpicture}
-
\begin{tikzpicture} [baseline={([yshift=-.5ex]current bounding box.center)}, scale=.4]
\filldraw[style=box style, pattern color=red, pattern=horizontal lines] (0,0) rectangle (1,2);
\filldraw[style=box style, pattern color=red, pattern=horizontal lines] (3,0) rectangle (4,1);
\filldraw[style=box style, pattern color=blue, pattern=north east lines] (1,0) rectangle (2,2);
\filldraw[style=box style, pattern color=blue, pattern=north east lines] (2,0) rectangle (3,1);
\end{tikzpicture}, \ifx\form\ISSAC \\ \fi \ifx\form\PREP \quad \fi
\begin{tikzpicture} [baseline={([yshift=-.5ex]current bounding box.center)}, scale=.4]
\filldraw[style=box style, pattern color=red, pattern=horizontal lines] (0,0) rectangle (1,1);
\filldraw[style=box style, pattern color=red, pattern=horizontal lines] (1,0) rectangle (2,2);
\filldraw[style=box style, pattern color=blue, pattern=north east lines] (0,1) rectangle (1,3);
\filldraw[style=box style, pattern color=blue, pattern=north east lines] (2,0) rectangle (3,1);
\filldraw[style=box style, pattern=dots] (1,2) rectangle (2,3);
\filldraw[style=box style, pattern=dots] (2,1) rectangle (3,3);
\end{tikzpicture}
-
\begin{tikzpicture} [baseline={([yshift=-.5ex]current bounding box.center)}, scale=.4]
\filldraw[style=box style, pattern color=red, pattern=horizontal lines] (0,0) rectangle (1,2);
\filldraw[style=box style, pattern color=red, pattern=horizontal lines] (1,0) rectangle (2,1);
\filldraw[style=box style, pattern color=blue, pattern=north east lines] (1,1) rectangle (2,3);
\filldraw[style=box style, pattern color=blue, pattern=north east lines] (2,0) rectangle (3,1);
\filldraw[style=box style, pattern=dots] (0,2) rectangle (1,3);
\filldraw[style=box style, pattern=dots] (2,1) rectangle (3,3);
\end{tikzpicture}, \ifx\form\ISSAC \quad \fi \ifx\form\PREP \\ \fi
\begin{tikzpicture} [baseline={([yshift=-.5ex]current bounding box.center)}, scale=.4]
\filldraw[style=box style, pattern color=red, pattern=horizontal lines] (0,0) rectangle (1,2);
\filldraw[style=box style, pattern color=red, pattern=horizontal lines] (1,0) rectangle (2,1);
\filldraw[style=box style, pattern color=blue, pattern=north east lines] (0,2) rectangle (1,4);
\filldraw[style=box style, pattern color=blue, pattern=north east lines] (1,1) rectangle (2,2);
\filldraw[style=box style, pattern=dots] (1,2) rectangle (2,4);
\filldraw[style=box style, pattern=dots] (2,0) rectangle (3,1);
\end{tikzpicture}
-
\begin{tikzpicture} [baseline={([yshift=-.5ex]current bounding box.center)}, scale=.4]
\filldraw[style=box style, pattern color=red, pattern=horizontal lines] (0,0) rectangle (1,1);
\filldraw[style=box style, pattern color=red, pattern=horizontal lines] (1,0) rectangle (2,2);
\filldraw[style=box style, pattern color=blue, pattern=north east lines] (0,1) rectangle (1,2);
\filldraw[style=box style, pattern color=blue, pattern=north east lines] (1,2) rectangle (2,4);
\filldraw[style=box style, pattern=dots] (0,2) rectangle (1,4);
\filldraw[style=box style, pattern=dots] (2,0) rectangle (3,1);
\end{tikzpicture}, \ifx\form\ISSAC \\ \fi \ifx\form\PREP \quad \fi
\begin{tikzpicture} [baseline={([yshift=-.5ex]current bounding box.center)}, scale=.4]
\filldraw[style=box style, pattern color=red, pattern=horizontal lines] (0,0) rectangle (1,2);
\filldraw[style=box style, pattern color=red, pattern=horizontal lines] (1,0) rectangle (2,1);
\filldraw[style=box style, pattern color=blue, pattern=north east lines] (0,2) rectangle (1,4);
\filldraw[style=box style, pattern color=blue, pattern=north east lines] (1,1) rectangle (2,2);
\filldraw[style=box style, pattern=dots] (0,4) rectangle (1,5);
\filldraw[style=box style, pattern=dots] (2,0) rectangle (3,2);
\end{tikzpicture}
-
\begin{tikzpicture} [baseline={([yshift=-.5ex]current bounding box.center)}, scale=.4]
\filldraw[style=box style, pattern color=red, pattern=horizontal lines] (0,0) rectangle (1,1);
\filldraw[style=box style, pattern color=red, pattern=horizontal lines] (1,0) rectangle (2,2);
\filldraw[style=box style, pattern color=blue, pattern=north east lines] (0,1) rectangle (1,3);
\filldraw[style=box style, pattern color=blue, pattern=north east lines] (2,0) rectangle (3,1);
\filldraw[style=box style, pattern=dots] (0,3) rectangle (1,5);
\filldraw[style=box style, pattern=dots] (2,1) rectangle (3,2);
\end{tikzpicture}.
\end{gather*}
From this computation one conjectures that $\ker(\pi)$ is generated
(up to symmetry) by moves of width and degree at most three.  Note
however that the computation is not a proof, since there is no
a-priori bound on the width.  In principle there could be some hidden
width seven move that our truncated computation has not found.
Theorem~\ref{thm:markov} shows that this is not the case.
\end{example}

The result of the above computation is a small subset of the degree
five equivariant Gröbner basis
in~\cite[Example~7.2]{DEKL13:inf-toric}.  In general, even for width
two, it is unknown whether a finite equivariant Gr\"obner basis exists
(either for $\Sym$ action or that of $\Inc(\NN)$, a monoid of strictly
increasing functions $\NN\to\NN$).

\section{Equivariant Markov Bases \ifx\form\ISSAC\\ \fi(case of width 2)}\label{sec:markov-2}

In this section we return to the width two map from
\eqref{eq:map-width-2} and construct an equivariant Markov basis.  Fix
exponents $a,b\in\NN$ with $\gcd(a,b)=1$.  If the gcd is larger, then the following
results apply after dividing by the gcd since this will not affect the kernel of the map.  The
Markov basis in Theorem~\ref{thm:markov} has two contributions: one
$\phi$-preimage of each element in the two families in
Proposition~\ref{prop:psiGens} and two elements from the following
proposition.

\begin{prop}\label{prop:phiGens}
The 3-cycle cubic $y_{12}y_{23}y_{31} - y_{21}y_{32}y_{13}$ and
the basic quadric $y_{12}y_{34} - y_{14}y_{32}$ are an equivariant
Markov basis of~$\ker(\phi)$.
\end{prop}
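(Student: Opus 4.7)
I would identify each monomial $y^A \in K[Y]$ with the loopless directed multigraph $G_A$ on $\NN$ whose edge $i \to j$ has multiplicity $a_{ij}$, so that $y^A - y^B \in \ker(\phi)$ iff $G_A$ and $G_B$ have matching in- and out-degrees at every vertex. After cancelling the common factor I may assume $G_A$ and $G_B$ have disjoint edge multisets (primitivity). The goal is to exhibit, for every such pair, a sequence of $\Sym$-translates of the basic quadric and the 3-cycle cubic that transforms $G_A$ into $G_B$.

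Passing to the bipartite representation (with a source copy $i^{\mathrm{out}}$ and a sink copy $j^{\mathrm{in}}$ for every $i$), the symmetric difference decomposes into closed walks whose edges alternate between $G_A$ and $G_B$. These \emph{swap cycles} have even length $2m$ with $m \ge 2$ (length two is excluded by primitivity), and reversing a swap cycle preserves the fiber of $\phi$. It suffices to realize every swap-cycle reversal inside the ideal generated by the $\Sym$-orbits of the two proposed moves, and I would do this by induction on $m$.

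The base case $m = 2$ corresponds to a single basic quadric $y_{ij}y_{kl} - y_{il}y_{kj}$, with the four indices automatically distinct by loop-freeness. For $m \ge 4$ I would show that some non-consecutive pair of $G_A$-edges in the cycle can be swapped by a basic quadric without introducing any diagonal variable $y_{ii}$; the resulting move shortens the swap cycle into two strictly shorter ones to which induction applies. The combinatorial heart is: if every \emph{adjacent} swap were blocked by a forced loop, the cyclically-ordered source labels $i_1, \dots, i_m$ and sink labels $j_1, \dots, j_m$ would have to satisfy the rigid identification $i_a = j_{a+1}$ for all $a$; an additional coincidence $i_a = j_b$ blocking a \emph{non-adjacent} swap would then force two of the $i_a$'s to coincide, contradicting the simplicity of the cycle. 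The remaining case is $m = 3$: either an adjacent swap still works, or the cycle is precisely an opposed pair of directed triangles $p \to q \to r \to p$ versus $p \to r \to q \to p$ on three vertices, which is reversed by one $\Sym$-translate of the 3-cycle cubic.

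\textbf{Main obstacle.} The delicate step is the no-loop bookkeeping that governs when a basic quadric can legitimately split a swap cycle: the quadric is unusable exactly when every candidate swap creates a $y_{ii}$, and I must verify that the \emph{only} irreducible instance of this obstruction is the length-six 3-cycle pattern. Once this case analysis is in place, the 3-cycle cubic---introduced precisely to resolve that exceptional triangle---completes the argument.
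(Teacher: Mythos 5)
Your proof is correct, but it takes a genuinely different route from the paper's. The paper inducts directly on the degree of the binomial $y^G - y^H$: assuming $G$ and $H$ share no edge, it fixes one edge $(1,2)\in G$, locates a short list of forced edges in $G$ and $H$, and in each of four cases exhibits a single basic quadric or 3-cycle cubic applied to $G$ or $H$ that manufactures a shared edge, allowing division. You instead pass to the bipartite double cover, decompose the signed difference into alternating swap cycles, and induct on cycle length $m$, splitting cycles by basic quadrics whenever possible. Both arguments isolate the same exceptional configuration---your blocked length-six cycle, which is exactly the paper's ``$j=1$, $k=2$'' subcase---and resolve it with the 3-cycle cubic. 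Your decomposition makes it structurally transparent \emph{why} the cubic is the unique exceptional move beyond quadrics (it is the one irreducible no-loop obstruction), at the cost of needing the standard lemma that the alternating decomposition may be taken with each bipartite vertex visited at most once per cycle (re-pair the incident $G_A$/$G_B$-edges at any repeated vertex to split the walk); your appeal to ``simplicity of the cycle'' in the non-adjacent-swap argument is precisely that fact, and should be stated. One small imprecision: in the $m=2$ base case, loop-freeness rules out $i_1=j_1$, $i_2=j_2$, $i_1=j_2$, $i_2=j_1$, while the remaining coincidences $i_1=i_2$ and $j_1=j_2$ are ruled out by primitivity (they would make the cycle trivial), not by loop-freeness. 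The paper's route is shorter and entirely elementary; yours is longer to set up but scales more cleanly to a complete classification of moves.
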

The proof of Proposition~\ref{prop:phiGens} is an adaptation of a
standard technique from algebraic statistics.  It appeared in
\cite[Section~5]{aoki05:_markov_monte_carlo} but our version is due to
Jan Draisma and Jan-Willem Knopper.  We give it for the sake of
completeness.
\begin{proof}[Proof of Proposition~\ref{prop:phiGens}]
Representing a variable $y_{ij}$ as a directed edge $i\to j$,
monomials in $K[y_{ij}]$ correspond to finite loop-free directed
multigraphs on~$\NN$.  For each such graph $G$, let $y^G$ denote
corresponding monomial.  A binomial $y^G - y^H \in \ker(\phi)$
corresponds to a pair of graphs with the same in-degree and out-degree
on each vertex.  The proof is by induction on the degree $d$ of the
binomial.
If $G$ and $H$ share an edge, we can divide by that edge and are done
by induction.  If they don't share an edge, then it suffices to find
an applicable 3-cycle cubic or basic quadric to either $G$ or $H$ and
obtain a new graph $G'$ or $H'$ which shares an edge with $H$ or $G$,
respectively.
 
Without loss of generality, let $(1,2)\in G$ be an edge.  Then $H$ has
an edge out from~$1$, which we can assume is~$(1,3)$, and an edge
$(i,2)$ with~$i \neq 1$.  If $i \neq 3$, apply the basic quadric to
the edges $(1,3)$ and $(i,2)$ to get a graph $H'$ with edges $(1,2)$
and~$(i,3)$. Now $G$ and $H'$ share the edge~$(1,2)$.
If $i = 3$, then $G$ has edges $(3,j)$ and $(k,3)$ with $j \neq 2$ and
$k \neq 1$.  If $j \neq 1$ then apply the basic quadric to $(3,j)$
and~$(1,2)$ to get $G'$ with $(3,2)$ and $(1,j)$, sharing $(3,2)$
with~$H$.  Similarly, if $k \neq 2$, apply the basic quadric to
$(k,3)$ and $(1,2)$.  Finally, if $j = 1$ and $k=2$, then $G$ has a
3-cycle $(1,2)$, $(2,3)$, $(3,1)$.  Applying the 3-cycle cubic to
reverse the direction produces $G'$ with $(2,1)$, $(3,2)$, $(1,3)$
which has edges in common with~$H$.
\end{proof}

It remains to find generators for $\im(\phi) \cap \ker(\psi)$.  For
the remainder of this section, we consider the restriction of $\psi$
to $\im(\phi)$---the {\em matching monoid ring}:
\[ \im(\phi) = K[z_{1i}z_{2j} \mid i,j \in \NN,\; i \neq j] \subseteq
K[Z]. \] Because of the interpretation of its generators as matchings
on $[2]\times \NN$, the multiplicative monoid of $\im(\phi)$ is called
the {\em matching monoid}~\cite[Section~2]{DEKL13:inf-toric}.

\begin{prop}\label{prop:psiGens}
As an ideal in the matching monoid ring, $\ker(\psi)$ is generated by
the $\Sym$-orbits of the binomials $z^A - z^B$ from the following two
finite families:
 \begin{enumerate}[(i)]
  \item For each $0 \leq n \leq a-b$,
  \[ A = \begin{bmatrix}b+n & n & c_{13} & c_{14} & \cdots \\
                        0   & a & c_{23} & c_{24} & \cdots \end{bmatrix},
     \ifx\form\PREP \quad \fi
     \ifx\form\ISSAC \]\[ \fi
     B = \begin{bmatrix}n & b+n & c_{13} & c_{14} & \cdots \\
                        a &   0 & c_{23} & c_{24} & \cdots \end{bmatrix}\]
  where $\sum_{j \geq 3} c_{1j} = a-b-n$ and $\sum_{j \geq 3} c_{2j} = n$.
  \item For each $1 \leq n \leq b$,
  \[ A = \begin{bmatrix}b & 0 & a-b+n & 0 & \cdots \\ 
                        0 & a & n     & 0 & \cdots \end{bmatrix},
     \ifx\form\PREP \quad \fi
     \ifx\form\ISSAC \]\[ \fi
     B = \begin{bmatrix}0 & b & a-b+n & 0 & \cdots \\
                        a & 0 & n     & 0 & \cdots \end{bmatrix}.\]
 \end{enumerate}
 Additionally, all these binomials are minimal with respect to
 division in the matching monoid ring.
\end{prop}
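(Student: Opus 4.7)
The plan is to prove generation by induction on a combinatorial invariant of the binomial, and to prove minimality by direct case analysis.

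Setup: for $z^A - z^B \in \ker(\psi)$ with $A, B$ in the matching monoid, the lattice difference lies in $\im(A_\phi) \cap \ker(A_\psi)$, so we can write
\[
A - B = \begin{bmatrix} b \\ -a \end{bmatrix} m
\]
for an integer row vector $m$ with $\sum_j m_j = 0$. Let $N(A, B) := \sum_{m_j > 0} m_j$; this vanishes if and only if $A = B$. I would induct on $N(A, B)$, showing that for $N > 0$ one can find a binomial $g$ from family (i) or (ii) and a permutation $\sigma \in \Sym$ such that the positive term of $\sigma \cdot g$ divides $z^A$ in the matching monoid ring. Replacing $z^A$ by the resulting $z^{A'}$ then yields $z^{A'} - z^B \in \ker(\psi)$ with $N(A', B) = N(A, B) - 1$, closing the induction.

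To produce the required generator, I would fix a positive column index $p$ and a negative column index $q$ of $m$, and relabel via $\Sym$ so that $p = 1$ and $q = 2$. The structure of $A$ at columns $1$ and $2$ then determines which family applies. If some matching decomposition of $z^A$ has column $1$ free of $z_{2,1}$-entries, the clean exchange between columns $1$ and $2$ fits family (i), with $n := a_{1,2}$ (constrained to $[0, a-b]$ by the row-sum equality and the fact that $a_{1,1} \geq b$ when $m_1 > 0$) and the filler $c_{ij}$ recording the remaining matching on columns $\geq 3$. Otherwise, column $1$ is unavoidably entangled with some other column; a third column must mediate the exchange, which yields a binomial from family (ii), with the third column absorbing $a-b+n$ copies of $z_{1,3}$ and $n$ copies of $z_{2,3}$ drawn from the existing matching.

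The main obstacle is verifying that one of the two scenarios always applies and that the resulting $z^{A'}$ is again a matching monomial. This requires careful tracking of the matching combinatorics: columns with sum equal to the common degree $d$ are ``extreme'' (consisting purely of $z_{1,k}$'s or purely of $z_{2,k}$'s) and severely constrain the local moves. The parameter $n$ and the filler in family (i), together with the third-column buffer in family (ii), supply exactly the slack needed to keep every intermediate monomial in the matching monoid. For minimality, I would inspect each listed binomial and, for every candidate $z_{1,i} z_{2,j}$ with $i \neq j$, check whether dividing both terms by $z_{1,i} z_{2,j}$ leaves matchings on both sides; the finite support of each binomial (columns $1, 2$ plus the $c_{ij}$-columns in family (i), or columns $1, 2, 3$ in family (ii)) reduces this to a concrete combinatorial check.
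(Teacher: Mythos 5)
Your overall skeleton is plausible but it compresses into one inductive step what the paper deliberately separates into two stages, and the hard content is left unproved. The paper first establishes (Proposition~\ref{prop:graver}) that $\ker(\psi)$ is generated by binomials with a \emph{single} elementary difference $A-B=\begin{bmatrix}b&-b&0&\cdots\\-a&a&0&\cdots\end{bmatrix}$, using a sign-consistent decomposition of $m$ into moves $[1,-1,0,\ldots]$; the monotonicity of the resulting chain $B=B_0,\ldots,B_s=A$ is exactly what guarantees every intermediate matrix stays in the matching monoid. Only then does the paper ask which such single-move binomials are irreducible, via a four-way case analysis based on \emph{loaded} columns. Your one-step induction on $N(A,B)$ asks you to prove both things at once: that a listed generator's positive term divides $z^A$ \emph{and} that the quotient (times the negative term) remains a matching monomial. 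That claim is true, but establishing it is precisely the substance of the proposition, and your sketch does not establish it.

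Concretely, two things go wrong in your outline. First, the pivotal claim that loaded columns are ``extreme (consisting purely of $z_{1,k}$'s or purely of $z_{2,k}$'s)'' is false: the paper's Case~2 shows that a loaded column $j\geq 3$ (with $C_j=d$) necessarily has \emph{both} $c_{1j}$ and $c_{2j}$ strictly positive (indeed $c_{1j}=a$, $c_{2j}=b$). The combinatorics of loaded columns do not behave the way your sketch assumes, so the ``a third column must mediate'' story is not justified. Second, the phrase ``if some matching decomposition of $z^A$ has column~$1$ free of $z_{2,1}$-entries'' is not well posed: whether $z_{2,1}$ occurs in $z^A$ is a fixed property of the monomial (the entry $a_{2,1}$), not something a choice of decomposition can change; you may be conflating $z^A$ with a choice of preimage in $K[Y]$, which is irrelevant here since the proposition is about the matching monoid ring itself. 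Finally, the minimality claim is deferred to ``a concrete combinatorial check'' without carrying it out; that check must use the divisibility criterion for the matching monoid (row sums equal to $d$ and column sums bounded by $d$), which your write-up never invokes. To turn this into a proof you would need to (a) first reduce to single-move binomials as in the paper, using sign-consistency to control membership in the monoid, and (b) replace your loaded-column heuristic with the actual case analysis showing that a single-move binomial with no common factor must fall into family (i) or (ii).
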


A generating set of the kernel $\pi = \psi \circ \phi$ consists of
preimages of the generators in Proposition~\ref{prop:psiGens} in
$K[Y]$, combined with the generators for $\ker(\phi)$ in
Proposition~\ref{prop:phiGens}.  The remainder of this section
comprises the proof of Proposition~\ref{prop:psiGens}.  To deal with
divisibility in the matching monoid, recall that its generators are
the $(2\times \NN)$-matrices that have exactly one entry 1 in each
row, but not in the same column.  In fact, more holds: according to
\cite[Proposition~3.1]{DEKL13:inf-toric}, a monomial $z^A \in K[Z]$ is
contained in the matching monoid if and only if there is some $d$ such
that both row sums of $A$ are equal to $d$ and all column sums of $A$
are $\leq d$ (the matching monoid is normal).  Consequently, a
monomial is divisible by a generator if we can subtract one in two
different columns (reducing the row sum), without violating the new
column bound~$d-1$.

\begin{prop}\label{prop:graver}
As an ideal in the matching monoid ring, $\ker(\psi)$ is generated up
to symmetry by binomials $z^A - z^B$ with
\[ 
A - B = 
\begin{bmatrix}
b & -b & 0 & \cdots \\ 
-a & a & 0 & \cdots
\end{bmatrix}. 
\]
\end{prop}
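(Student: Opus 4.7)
The plan is to prove Proposition~\ref{prop:graver} by induction on a complexity measure for binomials in $\ker(\psi)$. Given any $z^C - z^D \in \ker(\psi)$ with both $z^C$ and $z^D$ in the matching monoid, the argument already present in the proof of Proposition~\ref{prop:2indexLattice} shows that
\[
C - D = \begin{bmatrix} -b \\ a \end{bmatrix} \begin{bmatrix} n_1 & n_2 & \cdots \end{bmatrix}
\]
for some $n \in \ZZN$ with $\sum_j n_j = 0$. Set $N(C,D) := \sum_j |n_j|$ and induct on $N$. The base case $N = 0$ is trivial. For the inductive step, assume without loss of generality that $a \geq b$ and pick indices $i$ with $n_i > 0$ and $j$ with $n_j < 0$.

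The form of $C - D$ forces $C_{1j} \geq b$ and $C_{2i} \geq a$, so the matrix $C'$ obtained from $C$ by subtracting $b$ at position $(1,j)$ and $a$ at $(2,i)$ while adding $b$ at $(1,i)$ and $a$ at $(2,j)$ has non-negative entries. By construction $C - C'$ is a $\Sym$-translate of the elementary move in the statement, and $N(C', D) = N(C, D) - 2$. Granting that $C'$ is itself in the matching monoid, the binomial $z^C - z^{C'}$ is an instance of the proposed family (with $A = C$, $B = C'$), and writing $z^C - z^D = (z^C - z^{C'}) + (z^{C'} - z^D)$ and applying the inductive hypothesis to the second summand closes the step.

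The main technical point is verifying that $C'$ lies in the matching monoid. Row sums are preserved, so by~\cite[Prop.~3.1]{DEKL13:inf-toric} only the column-sum bound needs attention. The column $j$ sum of $C'$ exceeds that of $C$ by $a - b$, which a priori could push it past the degree $d$. The crucial observation is that $n_j \leq -1$ rules this out: writing $c_j$ and $s_j$ for the column $j$ sums of $C$ and $D$, the identity $c_j - s_j = (a-b) n_j$ combined with $s_j \leq d$ gives $c_j \leq d - (a-b)$, so $c_j + (a - b) \leq d$. The column $i$ bound is handled symmetrically (the sum there decreases by $a-b$), and the remaining columns are unchanged. This arithmetic check, which could seem to be the hard point of the argument, is what makes the induction actually go through, and I expect it to be the step requiring the most care in a full write-up.
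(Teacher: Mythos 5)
Your proof is correct and takes essentially the same approach as the paper's: both decompose the column difference vector $n$ into signed unit moves chosen sign-consistently and verify that the intermediate exponent matrices stay in the matching monoid. The only stylistic difference is that the paper builds the whole telescoping sequence $B_0,\dots,B_s$ at once and invokes monotonicity of entries and column sums (squeezing between the endpoints $A$ and $B$, both of which satisfy the column bound), whereas you phrase it as an induction on $\sum_j|n_j|$ with an explicit per-step arithmetic check of the column bound; the underlying content is identical.
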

\begin{proof}
Let $z^A - z^B \in \ker(\psi)$.  Like in Section~\ref{sec:lattice}, the
difference $A - B$ must be of the form
\[ 
\begin{bmatrix}
b \\
-a
\end{bmatrix}
\begin{bmatrix}
n_1 & n_2 & \cdots
\end{bmatrix} 
\]
where the row vector $n = [n_1\;n_2\;\ldots]$ has entries summing to
zero.  Such a vector can be expressed as a sum $n = v_1 + \cdots +
v_s$ where each $v_i$ is in the $\Sym$-orbit of
$\begin{bmatrix}1&-1&0&\ldots\end{bmatrix}$.  Even more, the
decomposition can be chosen \emph{sign-consistently}, that is, each
$v_i$ has 1 in a position $j$ where $n_j >0$ and has $-1$ where~$n_j <
0$.

Consider the sequence $B = B_0, B_1, \ldots, B_s = A$ of matrices in
$\psi^{-1}(B)$ defined by
\[ 
B_i = B + \begin{bmatrix}b \\ -a\end{bmatrix} (v_1 + \cdots + v_i).
\] 
The sequence is monotonic in each entry, and every column sum is also
monotonic.  Note that the all row sums of all $B_i$ are equal to~$d$.
Since $A$ and $B$ are in the matching monoid, they have non-negative
entries and all column sums $\leq d$.  By the monotonicity of the
sequence, each $B_i$ also satisfies these properties and therefore is
also in the matching monoid.  The proof is complete since $z^{B_i} -
z^{B_{i-1}} \in \ker(\psi)$ for any $i$, and
\[ 
B_{i} - B_{i-1} = \begin{bmatrix}b \\ -a\end{bmatrix}v_i =
\sigma_i \begin{bmatrix}b & -b & 0 & \cdots\\ -a & a & 0 &
\cdots\end{bmatrix}
\] for some $\sigma_i \in \Sym$.
\end{proof}

To prove Proposition~\ref{prop:psiGens} we need to intersect the
matching monoid ring with the equivariant ideal generated by binomials
$z^A - z^B$ with
 \[ 
 A - B = \begin{bmatrix}b & -b & 0 & \cdots\\ -a & a & 0 &
 \cdots\end{bmatrix}.
\]
A general such pair $A$, $B$ is of the form
\[ 
A = \begin{bmatrix}c_{11}+b & c_{12}   & c_{13} & c_{14} & \cdots \\
                   c_{21}   & c_{22}+a & c_{23} & c_{24} & \cdots \end{bmatrix}
\ifx\form\PREP \quad \fi
\ifx\form\ISSAC \]\[ \fi
B = \begin{bmatrix}c_{11}   & c_{12}+b & c_{13} & c_{14} & \cdots \\
                   c_{21}+a & c_{22}   & c_{23} & c_{24} &
\cdots \end{bmatrix}. 
\]
Let $C_j = c_{1j} + c_{2j}$ and $R_i = \sum_{j = 1}^\infty c_{ij}$
be the column and row sums, respectively, {\em excluding} the
contributions $a$ and $b$ in the first two columns.

We show that either the pair $(A,B)$ is on the list in
Proposition~\ref{prop:psiGens}, or $A$ and $B$ are both divisible (in
the matching monoid ring) by a common generator.  Let $d = R_1 + b =
R_2 + a$ be the degree of $A$ and $B$ which gives a bound on column
sums: $C_j \leq d-a$ for $j = 1,2$ and $C_j \leq d$ otherwise.  We say
that a column is \emph{loaded} if it achieves its bound.  Loaded
columns are obstacles to dividing by a common factor, since the degree
can't be decreased without also decreasing the loaded columns by the
same amount.  $A$ and $B$ have a common factor if there exist positive
$c_{1j}$ and $c_{2k}$ such that $j \neq k$ and there are no loaded
columns outside of $j$ and $k$.

\begin{proof}[Proof of Proposition~\ref{prop:psiGens}]
We distinguish four cases depending on the locations of the (at most
two) loaded columns.

\noindent
\emph{Case 1: No columns are loaded}.  We have $d > a$, so $R_1$ and
$R_2$ are both positive.  The monomials $z^A$ and $z^B$ have a common
factor if there are positive $c_{1j}$ and $c_{2k}$ in different
columns $j \neq k$, therefore $c_{ij} > 0$ only for one particular
column~$j$. If $j=1$, then $C_1 = R_1 + R_2 = 2d - a - b > d-a$ which
is a contradiction, and similarly for $j=2$.  Consequently $j\geq 3$
and thus $A,B$ are of the second type for some $1 \leq n < b$.

\noindent
\emph{Case 2: Column $j \geq 3$ is loaded}.  Let $C_j = d$.  Since
$\sum_j C_j = 2d - a - b$, any other column has $C_k \leq d - a - b$
and is not loaded.  Because of the bounds $c_{1j} \leq R_1 = d - b$
and $c_{2j} \leq R_2 = d - a$ and the sum $c_{1j} + c_{2j} = d$,
both $c_{1j}$ and $c_{2j}$ are positive.  Again, all other values of
$c$ must be zero or else $A$ and $B$ have a common factor.  Then we
have $d = c_{1j} + c_{2j} = b + c_{1j} = a + c_{2j}$ and thus
$c_{1j} = a$ and $c_{2j} = b$.  Up to symmetry, this is the binomial
of type~2 with $n = b$.
 
\noindent
\emph{Case 3: Exactly one of Columns one and two is loaded}.  Say
column one is loaded.  In this case no column $j$ can be loaded for
$j\geq 3$: If $c_{11} > 0$ then by the divisibility argument $c_{2j} =
0$ for all $j \neq 1$, and similarly if $c_{21} > 0$, then $c_{1j} =
0$ for $j \neq 1$.  Thus either $C_j = 0$ for $j > 1$ or one of $R_1$
or $R_2$ is zero.  The first case leads to a contradiction as in
Case~1.  So all positive $c$ values are in one row, which must be the
first row since $R_1 > R_2$.  This implies $d = a$ and thus that
column one is loaded contradicting the assumption.  By the same
argument, we cannot have column two loaded and column one not loaded.
 
\noindent
\emph{Case 4: Columns one and two are loaded}.  Either $z^A,z^B$
are divisible by a common generator or we are in one of the following
four situations: $c_{11} = c_{12} = 0$; $c_{21} = c_{22} = 0$;
$C_1 = 0$; or $C_2 = 0$.  However since both column 1 and column 2 are
loaded, $C_1 = C_2 = d-a$, so $C_1 = 0$ if and only if $C_2 = 0$, and
these cases are subsumed by the other two.  If $c_{11} = c_{12} =
0$, then $c_{21} = c_{22} = d-a$.  This implies
\[
2(d-a) + \sum_{j\geq 3}c_{2j} = R_2 = d-a 
\]
so $R_2 = 0$.  Therefore we need only consider the case $c_{21} =
c_{22} = 0$.  Here $c_{11} = c_{12} = d-a$ and
\[ 
2(d-a) + \sum_{j \geq 3} c_{1j} = R_1 = d-b. 
\]
Therefore $\sum_{j \geq 3}c_{1j} = a-b-(d-a)$ and $\sum_{j\geq
3}c_{2j} = R_2 = d-a$.  With $n = d-a$ this yields the binomials of
type~1.
\end{proof}

\begin{remark}
We have gone through some of the many bases of an integer lattice
(ideal)~\cite[\S~1.3]{drton09:_lectur_algeb_statis} and consequently
one may ask if it is possible to define an $\Sym$-equivariant Graver
basis.  Graver bases originated in optimization problems in economy
and are now an important tool in the complexity theory of integer
programming \cite[Part~II]{deLoeraHemmecke13}.  Recall that the
\emph{Graver basis} $G$ of a lattice ideal~$I$ in a polynomial ring is
the unique subset of $I$ satisfying two equivalent properties.  First,
$G$ is the set of all {\em primitive} binomials in the ideal, meaning
that for any $x^A - x^B \in I$, there is $x^{A'} - x^{B'} \in G$ such
that $x^{A'}|x^A$ and $x^{B'}|x^B$.  Second, $G$ is the minimal set
such that for every binomial $x^A - x^B \in I$, the difference $A-B$
has a {\em sign-consistent decomposition} using~$G$, meaning that
there is a sequence of exponents $B = B_0, B_1,\ldots,B_s = A$ which
is monotonic in each entry and every $x^{B_{i+1}} - x^{B_i}$ is a
monomial times an element of~$G$.
In a general monoid ring such as the matching monoid ring, these
two properties are not equivalent.  The set of generators in
Proposition~\ref{prop:psiGens} satisfies the sign-consistent
decomposition property when considered as a subring of $K[Z]$ as
demonstrated in the proof of Proposition~\ref{prop:graver}.  However
it fails the primitiveness condition.  A counterexample with $(a,b) =
(2,1)$ is the binomial $z^A - z^B$ with
 \[ 
A = \begin{bmatrix} 3 & 0 & 1 & 0 & 0 & \cdots \\
                        0 & 2 & 0 & 2 & 0 & \cdots \end{bmatrix},
\ifx\form\PREP \quad \fi
\ifx\form\ISSAC \]\[ \fi
B = \begin{bmatrix} 2 & 1 & 0 & 1 & 0 & \cdots \\
                        2 & 0 & 2 & 0 & 0 & \cdots \end{bmatrix}. 
\]
In a monoid ring, the primitiveness condition is the more natural
property to use in the definition of a Graver basis.  The
sign-consistent decomposition property may not be meaningful if the
ring is not a subset of a polynomial ring.  More convincingly, the set
of primitive binomials forms a universal Gröbner basis, a very
important conclusion in the polynomial ring case.
Although our generating set does not contain all primitive binomials,
its $\Sym$-orbits form a universal Gröbner basis
nonetheless.
\end{remark}

\begin{prop}
The $\Sym$-orbits of the generators in Proposition~\ref{prop:psiGens}
form a universal Gröbner basis of $\ker(\psi)$ as an ideal in the
matching monoid ring.
\end{prop}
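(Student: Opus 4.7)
The plan is to fix an arbitrary term order $\prec$ on the matching monoid ring and show that for every nonzero $z^A - z^B \in \ker(\psi)$ with $z^A \succ z^B$, the leading monomial $z^A$ is divisible, in the matching monoid, by the $\prec$-leading monomial of some $\Sym$-translate of a generator from Proposition~\ref{prop:psiGens}.

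The strategy combines Proposition~\ref{prop:graver} with the case analysis behind Proposition~\ref{prop:psiGens}. First, write $A - B = v_1 + \cdots + v_s$ as a sign-consistent sum in which each $v_i$ is an $\Sym$-translate of the basic move
\[
\begin{bmatrix} b & -b & 0 & \cdots \\ -a & a & 0 & \cdots \end{bmatrix},
\]
and each partial sum $B_i := B + v_1 + \cdots + v_i$ lies in the matching monoid. For each $i$ the pair $(B_i, B_{i-1})$ satisfies the hypotheses of Proposition~\ref{prop:psiGens}, hence either $z^{B_i} - z^{B_{i-1}}$ is itself an $\Sym$-translate of a listed generator, or it has a common matching monoid factor $z^{C_i}$ so that $z^{B_i - C_i} - z^{B_{i-1} - C_i}$ is such a generator translate.

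Because $z^{B_s} = z^A \succ z^B = z^{B_0}$, at least one index $i$ must satisfy $z^{B_i} \succ z^{B_{i-1}}$; for such an $i$ the corresponding generator translate has $\prec$-leading monomial $z^{B_i - C_i}$, which divides $z^{B_i}$ in the matching monoid. What remains is to produce an $i$ for which $z^{B_i - C_i}$ also divides $z^A$. The cleanest case is $i = s$: then $B_s = A$ and divisibility is immediate.

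The main obstacle is therefore to arrange the sign-consistent decomposition so that its last step is $\prec$-increasing, i.e., $z^{B_s} \succ z^{B_{s-1}}$. I would establish this via an exchange lemma: if the last step of a given sign-consistent decomposition is $\prec$-decreasing, then some earlier $\prec$-increasing step (which must exist, as the net change is $\prec$-increasing) can be commuted into the last position while preserving both sign-consistency and the matching monoid condition at every intermediate state. Since each basic move is supported on only two columns, the commutation analysis is local and parallels the classical reorderings used in Graver-basis arguments; the delicate point is verifying that no intermediate column sum exceeds the common row sum $d$. Once this lemma is established, applying Proposition~\ref{prop:psiGens} to the final binomial $z^A - z^{B_{s-1}}$ yields the generator orbit element whose $\prec$-leading monomial divides $z^A$, completing the proof.
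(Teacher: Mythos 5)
Your proposal is correct in outline and rests on the same two pillars as the paper — the sign-consistent decomposition from Proposition~\ref{prop:graver} and the characterization of minimal moves behind Proposition~\ref{prop:psiGens} — but it routes the argument through an explicit reordering (``exchange'') lemma that the paper avoids. The paper instead takes $z^A$ to be the $\leq$-minimal monomial in its fiber, runs the sign-consistent path from $B$ to $A$, and observes that if some step $z^{B_i} \to z^{B_{i+1}}$ were increasing, then $C := A + B_i - B_{i+1}$ would be a matching-monoid monomial in the same fiber with $z^C < z^A$, contradicting minimality; hence the path is strictly decreasing and its first step already exhibits $z^B$ as divisible by a leading monomial of $\Sym G$. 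Your version works with an arbitrary binomial $z^A - z^B$ with $z^A \succ z^B$ and moves an increasing step to the end, then applies the case analysis from Proposition~\ref{prop:psiGens} to the last binomial. Both are valid; the paper's contradiction obviates the need to actually permute the decomposition.

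One point worth noting: the step you flag as the ``main obstacle'' is not delicate at all, and you should not worry about it. Sign-consistency already guarantees that for \emph{any} reordering of the $v_i$, every partial sum $B + \sum_{i \in S} v_i$ has each entry between the corresponding entries of $B$ and $A$, and — because each $v_i$ perturbs each column sum with a fixed sign determined by the sign of the target difference in that column — each column sum is likewise between those of $B$ and $A$, hence $\leq d$. Together with the fact that every $v_i$ has zero row sums, this puts every intermediate matrix in the matching monoid. So the exchange lemma is immediate once Proposition~\ref{prop:graver} is in place, and your argument closes. In fact, the single move the paper performs (replacing $A$ by $C = A - (B_{i+1}-B_i)$) is exactly an instance of this reordering, so in the end the two proofs use the same underlying fact, just packaged differently.
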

\begin{proof}
Fix any monomial $z^B$ in the matching monoid ring and a monomial
order $\leq$.  Let $z^A$ be the standard monomial in the equivalence
class of $z^B$ (that the normal form of $z^B$).  From the proof of
Proposition~\ref{prop:graver}, we have a path $B = B_0, B_1,\ldots,B_s
= A$ which is monotonic in each entry and such that each $z^{B_{i+1}}
- z^{B_i}$ is a monomial multiple of an element in $\Sym G$ where $G$
is the generating set in Proposition~\ref{prop:psiGens}.

Suppose this path is not strictly decreasing in the monomial order, so
there is some $z^{B_{i+1}} > z^{B_i}$.  Let $C = A+B_i - B_{i+1}$.
Because of the monotonicity of the sequence, the entries of $C$ are
between $A$ and $B$ so $z^C$ is in the matching monoid, and $z^A >
z^C$.  This contradicts the assumption that $z^A$ is a standard
monomial.  Therefore $\Sym G$ is a Gröbner basis for this order.
\end{proof}

\begin{remark}
In the theory of equivariant Gröbner bases, only monomial orders that
respect the monoid action are considered.  However the set $\Sym G$ is a
Gröbner basis for {\em any} monomial order.
\end{remark}

To get a generating set for $\ker(\pi)$ we combine the results of
Propositions~\ref{prop:phiGens} and~\ref{prop:psiGens}.  In
particular, for each generator $g$ of $\ker(\psi)$, we find a
representative of $\phi^{-1}(g) \subset K[Y]$, and then combine the
resulting list with the two generators of $\ker(\phi)$.
Interestingly, each generator of $\ker(\psi)$ has a {\em unique}
binomial preimage in $K[Y]$.
\begin{thm}
\label{thm:markov}
In the setup of \eqref{eq:map-width-2} with coprime $a>b$,
the following binomials form a Markov basis of~$\ker(\pi)$.
\begin{enumerate}[(i)]
 \item\label{it:i} $y_{12}y_{34} - y_{14}y_{32};$
 \item\label{it:ii} $y_{12}y_{23}y_{31} - y_{21}y_{32}y_{13};$
 \item\label{it:iii} for each $0 \leq n \leq a-b$,
  \[ y_{12}^{b+n} \prod_{j\geq 3} y_{j2}^{c_{1j}} y_{2j}^{c_{2j}} - y_{21}^{b+n} \prod_{j\geq 3} y_{j1}^{c_{1j}} y_{1j}^{c_{2j}} \]
  where $\sum_{j \geq 3} c_{1j} = a-b-n$ and $\sum_{j \geq 3}^\infty c_{2j} = n$;
 \item\label{it:iv} for each $1 \leq n \leq b$,
  \[ y_{12}^{b-n} y_{13}^n y_{32}^{a-b+n} - y_{21}^{b-n} y_{23}^n
  y_{31}^{a-b+n}. \]
\end{enumerate}
The maximum degree of binomials above is $\max(a+b, 2a-b)$ and
\[\width( \ker(\pi) ) = \max(4,a-b+2).\]
\end{thm}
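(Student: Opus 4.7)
The plan is to combine Propositions~\ref{prop:phiGens} and~\ref{prop:psiGens} via the factorization $\pi = \psi\circ\phi$ of~\eqref{eq:factorize}.  As recalled in the introduction, a Markov basis of $\ker(\pi)$ is the union of a Markov basis of $\ker(\phi)$ and the $\phi$-pullback of a Markov basis of $\im(\phi)\cap\ker(\psi)$ in the matching monoid ring.  Proposition~\ref{prop:phiGens} already provides binomials~(i) and~(ii) as an equivariant Markov basis of $\ker(\phi)$, so the task reduces to lifting each of the two families in Proposition~\ref{prop:psiGens} to a binomial in $K[Y]$.

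For each $z^A - z^B$ in Proposition~\ref{prop:psiGens}, I would exhibit the candidate preimage---binomial~(iii) for the first family, (iv) for the second---and verify the lift by direct expansion using $\phi(y_{ij}) = z_{1i}z_{2j}$.  For~(iii), summing the contributions of $\phi(y_{12}^{b+n})$, $\phi(y_{j2}^{c_{1j}})$, and $\phi(y_{2j}^{c_{2j}})$ over $j\ge 3$ yields row~$1$ entries $b+n,\,n,\,c_{13},\,c_{14},\dots$ and row~$2$ entries $0,\,a,\,c_{23},\,c_{24},\dots$, matching $A$; the companion check for $B$ and the analogous check for~(iv) are similar bookkeeping.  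Uniqueness of these binomial preimages (as alluded to before the theorem) follows because the zero entries in $A$ and $B$---at position $(2,1)$ for family~(iii), and additionally $(1,2)$ for family~(iv)---rigidly force the matching decomposition via a short case analysis on row and column capacities.

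For the degree and width bounds I would enumerate by family.  Binomials~(i) and~(ii) have $(\text{degree}, \text{width}) = (2,4)$ and $(3,3)$.  Family~(iii) has degree $(b+n) + (a-b-n) + n = a+n$, maximized at $n = a-b$ giving $2a-b$, and since $\sum_{j\ge 3}(c_{1j}+c_{2j}) = a-b$ it uses only indices in $\{1,\dots,a-b+2\}$, so width at most $a-b+2$.  Family~(iv) has degree $(b-n) + n + (a-b+n) = a+n \le a+b$ and width~$3$.  Taking maxima yields $\max(a+b,\,2a-b)$ for degree and $\max(4,\,a-b+2)$ for width.  Sharpness of the width equality is witnessed by the instance of~(iii) with $n = a-b$ and $c_{1,3} = \cdots = c_{1,a-b+2} = 1$, whose support genuinely spans $a-b+2$ indices and, by a minimality argument, cannot be reduced modulo symmetry by binomials of smaller width; the bound $4$ is forced by the basic quadric.

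The main obstacle is the bookkeeping---verifying the $\phi$-preimages and exhibiting an irreducible high-width binomial for the width lower bound---rather than any conceptual novelty; given the factorization principle and the two ingredient propositions, the theorem is essentially a direct assembly.
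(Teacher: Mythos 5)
Your assembly of the theorem from Propositions~\ref{prop:phiGens} and~\ref{prop:psiGens} via the factorization, together with the degree and width bookkeeping, matches the paper's route. Two issues in the sharpness argument need fixing, however.

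First, for $n = a-b$ in family~(\ref{it:iii}) the constraints force $\sum_{j\ge 3} c_{1j} = a-b-n = 0$, so \emph{all} $c_{1j}$ vanish and your choice $c_{1,3} = \cdots = c_{1,a-b+2} = 1$ is not admissible; the spread must be on the $c_{2j}$ instead (and the binomial is then $y_{12}^{a}\prod_j y_{2j}^{c_{2j}} - y_{21}^{a}\prod_j y_{1j}^{c_{2j}}$). The paper sidesteps this by taking $n = 0$, where the $c_{1j}$ carry the $a-b$ units and the resulting multidegree is $(ab,\,ab,\,a,\dots,a,\,0,\dots)$ with $a-b$ entries equal to~$a$. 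Either choice of $n$ can be made to work, but the indices must be consistent with which of $\sum c_{1j}$, $\sum c_{2j}$ is nonzero.

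Second, ``cannot be reduced modulo symmetry by binomials of smaller width'' is too vague to stand as a proof. The operative fact is stronger and cleaner: the chosen binomial is the \emph{only} element of $\ker(\pi)$ in its $\ZZ^{\NN}$-multidegree, because that multidegree fiber contains exactly two monomials of $K[Y]$. A multidegree fiber with exactly two elements forces the connecting binomial into \emph{every} Markov basis; this is what pins down the width at $a-b+2$ (and the analogous two-element-fiber argument on the multidegree $(a,b,a,b,0,\dots)$ pins the basic quadric, giving the lower bound of~$4$). You need to supply the count showing the fiber has size two---for $n=0$ this amounts to observing that the total $Y$-degree is $a$, that each $x_j^a$ with $j \ge 3$ forces exactly one factor $y_{j\,j'}$ with $j' \in \{1,2\}$, and that $x_1^{ab}=x_2^{ab}$ forces all those $j'$ to agree.

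Finally, you invoke ``uniqueness of the binomial preimages'' to identify the lifts~(\ref{it:iii}) and~(\ref{it:iv}); that remark is true but is not needed for the theorem. Any single lift per generator of $\ker(\psi)\cap\im(\phi)$ suffices, and a direct expansion of $\phi$ applied to the displayed binomials verifies they are lifts, as you note.
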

\begin{proof}
The only open items, the upper bound on the degree and the width
formula, are easily checked: first is achieved by generators of
type~(\ref{it:iii})~or~(\ref{it:iv}), second -- by the basic
quadric~(\ref{it:i}) or a generator of type~(\ref{it:iii}).  

To see the sharpness we show that for $n=0$, the two monomials of the
binomial in~(\ref{it:iii}) are the only two elements in their
multidegree.  This multidegree is \[d = (ab, ab, a, a, \dots, a, 0,
\dots)\] where there are $a-b$ entries equal to~$a$.  Let $m\in k[Y]$
be any monomial of multidegree~$d$.  The total degree of $m$
equals~$a$ since $2ab + (a-b)a = a(a+b)$.  Because of the $a$ entries
in $d$, $m$ must divisible by $y_{3j_3}y_{4j_4}\cdots y_{aj_a}$ where
each $j_i$ is either one or two.  Now since the first two entries of
$d$ both equal $ab$, the only possibility is that all $j_i$ are equal.
Consequently the only two monomials of multidegree $d$ are two
monomials in the type~(\ref{it:iii}) binomial for~$n=0$ and whenever
there are only two monomials of a given multidegree, their difference
appears in every Markov basis.
\end{proof}

\begin{remark}\label{r:finite}
As in Proposition~\ref{prop:psiGens} the list of binomials of the
third type in Theorem~\ref{thm:markov} is finite up to $\Sym$-action.
In particular, we need a representative for each partition of the pair
$(a-b-n,n)$ into a sum of pairs of nonnegative numbers such that in no
pair both entries are zero.
\end{remark}

\begin{example}
Reading line-wise from left to right, the decorated box shapes of
moves in Example~\ref{ex:621} are of types~(\ref{it:iii}) (with
$n=0$), (\ref{it:i}), (\ref{it:ii}), (\ref{it:iii}) (with $n=1$), and
(\ref{it:iv})
\end{example}

\begin{remark}\label{r:degree-complexity}
The maximal degree of the generators in Theorem~\ref{thm:markov}
matches the degrees in Table~1 of~\cite{Hillar13}.  However, we stop
short of proving that our generating set is an equivariant Gr\"obner
basis and we doubt that there needs to exist a term order for which it
is one.  According to our experiments in truncations, we expect the
degrees in Gr\"obner bases to exceed those in
Theorem~\ref{thm:markov}.  For instance in width five for $a=2$,
$b=1$, the Markov complexity in Theorem~\ref{thm:markov} is three,
while among many thousand random weight orders we have not found one
with complexity smaller than five.  In fact, we don't even know if
kernels of the form considered here always admit finite equivariant
Gröbner bases.
\end{remark}

\section{Conclusion}
\label{sec:questions-problems}
The main result of \cite{DEKL13:inf-toric} is that once the parameters
of the single monomial map are fixed (e.g., the exponents $a,b$ in the
case of width 2), there exists a finite equivariant Markov basis.
Precisely, the finiteness means that the description of a Markov basis
for a truncated problem does not depend on the width of truncation,
which can be viewed as another parameter ``tending to infinity''.
One striking example of a family of toric maps where Noetherianity of
equivariant kernels fails appears in De Loera and Onn's \emph{no hope
theorem}~\cite{loera06:_markov_bases_of_three_way}: Markov bases for
three-way contingency tables become \emph{arbitrarily complicated}:
the number of different orbits as well as the degrees of their
elements diverge as the sizes of the tables tend to infinity.

\medskip

Our work settles problems \quest{L1} and \quest{M1} in width two via
explicit constructions of equivariant lattice and Markov bases,
respectively.  The sharp width bounds that follow, answering
\quest{BL1} and \quest{BM1} in width two, are linear functions of the
width of the monomial map and of its exponents, respectively.
The more general questions \quest{L2} and \quest{M2} remain open,
and already the combinatorics of \quest{M1} in width larger than two
appears to be a lot more complicated than that in the case of width
two.

Nonetheless, the very modest bounds on the width in the settled cases
shall fuel optimism as to practical computation of up-to-symmetry
generators of kernels of equivariant toric maps in applications.

\bibliographystyle{amsalpha}
\bibliography{eqMarkov}

\providecommand{\bysame}{\leavevmode\hbox to3em{\hrulefill}\thinspace}
\providecommand{\MR}{\relax\ifhmode\unskip\space\fi MR }
\providecommand{\MRhref}[2]{%
  \href{http://www.ams.org/mathscinet-getitem?mr=#1}{#2}
}
\providecommand{\href}[2]{#2}
\begin{thebibliography}{DEKL13}

\bibitem[{\relax 4ti2}{\relax }]{4ti2}
{\relax 4ti2}~{\relax }team, \emph{4ti2---a software package for algebraic,
  geometric and combinatorial problems on linear spaces}, {a}vailable at
  www.4ti2.de.

\bibitem[AH07]{aschenbrenner07:_finit}
Matthias Aschenbrenner and Christopher~J. Hillar, \emph{Finite generation of
  symmetric ideals}, Trans. Amer. Math. Soc. \textbf{359} (2007), 5171--5192.

\bibitem[AH09]{aschenbrenner09:_erratum}
\bysame, \emph{Erratum for ``{F}inite generation of symmetric ideals''}, Trans.
  Amer. Math. Soc. \textbf{361} (2009), 5627.

\bibitem[AT05]{aoki05:_markov_monte_carlo}
Satoshi Aoki and Akimichi Takemura, \emph{{M}arkov chain {M}onte {C}arlo exact
  tests for incomplete two-way contingency table}, Journal of Statistical
  Computation and Simulation \textbf{75} (2005), no.~10, 787--812.

\bibitem[DEKL13]{DEKL13:inf-toric}
Jan Draisma, Rob~H. Eggermont, Robert Krone, and Anton Leykin,
  \emph{{Noetherianity for infinite-dimensional toric varieties}}, preprint,
  arXiv: 1306.0828 (2013).

\bibitem[DHK13]{deLoeraHemmecke13}
Jes\'{u}s~A. {De Loera}, Raymond Hemmecke, and Matthias K\"{o}ppe,
  \emph{Algebraic and geometric ideas in the theory of discrete optimization},
  MPS-SIAM Series on Optimization, SIAM, 2013.

\bibitem[DO06]{loera06:_markov_bases_of_three_way}
Jes\'{u}s~A. {De Loera} and Shmuel Onn, \emph{{M}arkov bases of three-way
  tables are arbitrarily complicated}, Journal of Symbolic Computation
  \textbf{41} (2006), 173--181.

\bibitem[DSS09]{drton09:_lectur_algeb_statis}
Mathias Drton, Bernd Sturmfels, and Seth Sullivant, \emph{Lectures on algebraic
  statistics}, Oberwolfach Seminars, vol.~39, Springer, Berlin, 2009, A
  Birkh\"{a}user book.

\bibitem[FM12]{fink2012matroids}
Alex Fink and Luca Moci, \emph{Matroids over a ring}, to appear in J. Europ.
  Math. Soc., arXiv:1209.6571 (2012).

\bibitem[HdC13]{Hillar13}
Christopher~J. Hillar and Abraham~Mart{\'\i}n del Campo, \emph{Finiteness
  theorems and algorithms for permutation invariant chains of {L}aurent lattice
  ideals}, J. Symb. Comput. \textbf{50} (2013), 314--334.

\bibitem[HS12]{hillar11:_finit_gr}
Christopher~J. Hillar and Seth Sullivant, \emph{Finite {G}r\"{o}bner bases in
  infinite dimensional polynomial rings and applications}, Advances in
  Mathematics \textbf{221} (2012), 1--25.

\bibitem[HTY09]{hara09:_markov}
Hisayuki Hara, Akimichi Takemura, and Ruriko Yoshida, \emph{{M}arkov bases for
  two-way subtable sum problems}, Journal of Pure and Applied Algebra
  \textbf{213} (2009), no.~8, 1507--1521.

\bibitem[Roo38]{room1938geometry}
Thomas~Gerald Room, \emph{The geometry of determinantal loci}, Cambridge
  University Press, Cambridge, UK, 1938.

\end{thebibliography}

\end{document}